\pgfplotsset{compat=1.14}
\newcommand{\ho}[1]{\bar{#1}}
\newcommand{\y}{y_\tau}
\newcommand{\adjoint}{\lambda_\tau}
\newcommand{\out}{\overline{\Psi}}
\newcommand{\outd}{\Psi}
\newcommand{\outfull}{Q}
\newcommand{\outfullvec}{q}
\newcommand{\F}{\mathrm{F}}
\newcommand{\parfrac}[2]{\frac{\partial #1}{\partial #2}}
\newcommand{\st}[1]{^{\{\mathrm{#1}\}}}
\newcommand{\stran}[1]{^{\{\mathrm{#1}\}\top}}
\newcommand{\error}{\mathcal{E}}
\newcommand{\GARK}{^{\scriptscriptstyle\mathrm{GARK}}}
\newcommand{\Id}{\mathbf{I}}
\newcommand{\Zero}{\mathbf{O}}
\renewcommand{\Re}{\mathds{R}}
\newcommand{\fun}{\mathsf{f}}        
\newcommand{\Jac}{\mathsf{J}}       
\newcommand{\Mass}{\mathsf{M}}   
\newcommand{\Advection}{\mathsf{D}}   
\newcommand{\Diffusion}{\mathsf{A}}   
\newcommand{\source}{\mathsf{s}}   
\newcommand{\Lag}{\mathcal{L}}     
\newcommand{\res}{\mathsf{r}}                 
\newcommand{\resfem}{\res^{\textsc{fem}}}     
\newcommand{\resx}{\res^{\textsc{space}}}     
\newcommand{\rest}{\res^{\textsc{time}}}     
\newcommand{\ncomp}{\mathrm{P}} 
\newcommand{\nsteps}{\mathrm{N}} 
\newcolumntype{Y}{>{\centering\arraybackslash}X}
\crefname{hypothesis}{Hypothesis}{Hypotheses}
\newif\iflong
\title{Goal-oriented a posteriori estimation of numerical errors in the solution of multiphysics systems\thanks{The work of MN and AS has been supported in part by NSF through awards NSF ACI–1709727 and NSF CCF–1613905, AFOSR through the award AFOSR DDDAS 15RT1037, and by the Computational Science Laboratory at Virginia Tech.}}
\author{Mahesh Narayanamurthi\thanks{Computational Science Laboratory, Department of Computer Science, Virginia Tech, Blacksburg, VA 24060 
  (\email{maheshnm@vt.edu}).}
\and Ulrich R\"{o}mer\thanks{Institut f\"ur Dynamik und Schwingungen, Technische Universit\"{a}t Braunschweig, Schleinitzstr. 20, 38106 Braunschweig, Germany
  (\email{u.roemer@tu-braunschweig.de}).}
\and Adrian Sandu\thanks{Computational Science Laboratory, Department of Computer Science, Virginia Tech, Blacksburg, VA 24060 
  (\email{asandu@vt.edu}).}}
\newif\iflong
\begin{document}

\includepdf[landscape=false,pages=-]{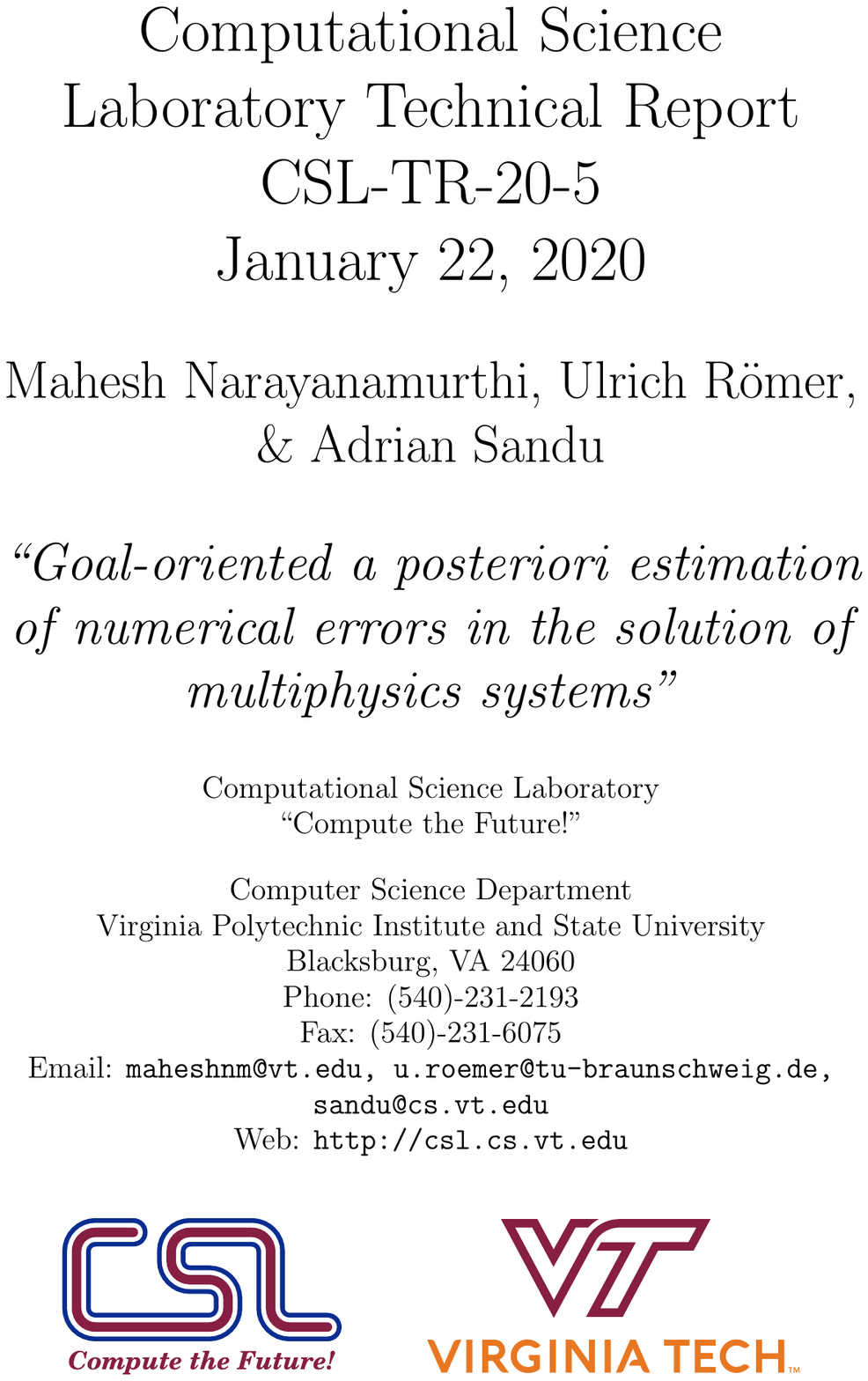}

\begin{abstract}
This paper develops a general methodology for a posteriori error estimation in time-dependent multiphysics numerical simulations. The methodology builds upon the generalized-structure additive Runge--Kutta (GARK)  approach to time integration.  GARK provides a unified formulation of multimethods that simulate complex systems by applying different discretization formulas and/or different time steps to individual components of the system. We derive discrete GARK adjoints and analyze their time accuracy. Based on the adjoint method, we establish computable a posteriori identities for the impacts of both temporal and spatial discretization errors on a given goal function. Numerical examples with reaction-diffusion systems illustrate the accuracy of the derived error measures. Local error decompositions are used to illustrate the power of this framework in adaptive refinements of both temporal and spatial meshes. 
    \par\noindent
    {\bf Keywords.} Multiphysics systems, adjoints, GARK methods, a posteriori error estimation.
\end{abstract}
    


\section{Introduction}
\label{sec:introduction}
Many modern science and engineering  fields rely on complex computer simulations of multiphysics systems. Such systems are driven by multiple simultaneous physical processes, and their evolution is characterized by multiple scales in time and space. Their simulation requires specialized numerical methods to ensure computational efficiency. Multimethods use different discretization strategies and different time steps to solve individual component subsystems, and the key challenge is to ensure their accuracy and stability. Indeed, coupling effects may largely amplify discretization errors and pollute the solution. There is a substantial amount of literature on multiphysics simulations, reviewed for instance in \cite{groen2013survey}. We mention multiphysics modeling for cardiac processes \cite{quarteroni2017integrated} and parallel domain decomposition methods for coupled flow solutions \cite{cao2014parallel} as two prominent examples of them.

We are particularly interested in a posteriori error analysis for multiphysics simulations. In the context of the finite element method, this topic has been discussed in \cite{babuska1978posteriori,ainsworth2011posteriori}. Both a priori and a posteriori error analysis have been addressed in \cite{eriksson1995introduction} with emphasis towards adaptivity. From an application point of view, goal-oriented a posteriori error estimation is particularly important. Goal-oriented methods estimate the error in an output quantity of the model, called goal function in this work, which is carrying relevant physical information for a given application. Such an approach relies on duality techniques from optimal control \cite{becker2001optimal,bangerth2013adaptive,rannacher1999error}, and may lead to significantly different refined meshes when used within an adaptive algorithm, compared to residual-based approaches. The use of adjoint solutions is not restricted to a posteriori error estimation of functionals of the solution but can be employed in other situations as well, for instance, to quantify various errors in the solution of inverse or optimization problems \cite{Sandu_2015_fdvar-aposteriori,Sandu_2011_adaptive-inverse,Sandu_2011_ICCS-AdapAdj,Sandu_2011_spaceTimeDADJ,Sandu_2012_optimizationSOA}. 

A posteriori error estimation for multiphysics or related problems has already been studied in a number of works. Dual, goal-oriented operator splitting methods have been discussed in \cite{estep2008posteriori,estep2008posterioriconf,carey2009posteriori} with special emphasis on multiscale problems. These approaches have also been applied to heat transfer in \cite{estep2010posteriori}. A different goal-oriented approach to fluid-structure interaction has been reported in \cite{fick2010adjoint}. Goal-oriented a posteriori error analysis for multi-step time discretization methods have been considered  in \cite{chaudhry2015posteriori,chaudhry2016posteriori} for multi-resolution schemes in space with parallel time discretization methods. In  \cite{bengzon2010adaptive}, an adaptive method for fluid-structure interaction with duality-based error estimates was introduced. The same authors have addressed stationary multiphysics problems in \cite{larson2008adaptive}, relying again on dual solution to capture the different error contributions. 

Distinct from the approaches presented in the literature, this work relies on the discrete adjoint equations to propagate numerical errors. This allows to establish a general framework for additively-split problems, covering multiphysics, partitioned, and coupled problems. The method we present is based on the framework of generalized-structure additive Runge Kutta methods (GARK)\cite{Sandu_2015_GARK}. Multirate GARK schemes have already been successfully applied in a multiphysics context \cite{Sandu_2016_GARK-MR}. Here we develop a general framework for a posteriori error analysis in the GARK setting based on duality techniques. We first show that, under mild requirements on the coefficients, the adjoint of a GARK method is again a GARK method with different coefficients. We then establish the convergence order of the discrete GARK adjoint methods. Our main contribution is an error estimation framework for space and time discretization errors, which relies on discrete GARK adjoints. We outline how the setting covers problems with different partitions, including multiphysics and coupled PDE problems. The accuracy of the different estimators and their ability to apportion error contributions to numerical solutions of individual physical processes is illustrated through problems including reaction, diffusion and advection phenomena. We also carry out adaptive refinement of spatial and temporal meshes based on the derived error estimates.  

The paper is structured in the following way. Section \ref{sec:discreteadjoints}  introduces the abstract model problem and its GARK discretization, and derives the discrete adjoint GARK method. Section \ref{sec:orderconvergence} establishes convergence orders for the discrete GARK adjoints. Section \ref{sec:impactoferrors} describes our error estimation framework. Implementation details are given in Section \ref{sec:implementation}, whereas numerical examples are reported in Section \ref{sec:numerics}. Finally, conclusions are drawn in Section \ref{sec:conclusions}.


\section{GARK numerical schemes for multiphysics systems and their discrete adjoints}
\label{sec:discreteadjoints}
In this section, before introducing the GARK method and discrete GARK adjoints, we sketch the main elements of adjoint sensitivity analysis, as this will be the cornerstone of our a posteriori error estimation framework. The last subsection also contains a specific example of a GARK scheme and its adjoint, for illustration. 

\subsection{Discrete versus continuous adjoints}

Following \cite{Sandu_2006_RKdadj} we consider the ODE 
\begin{subequations}
\label{eqn:model-system}
\begin{equation}
\label{eqn:ode}
y' = \fun(t,y), \quad y(t_0)=y_0 \in \Re^d, \quad t \in (t_0,t_\F],
\end{equation}
together with the goal function 
\begin{equation}
\label{eq:output_functional}
\out\bigl(y(t_0)\bigr) = \outfull\bigl(y(t_\F)\bigr). 
\end{equation}
\end{subequations}
Such a setting is very common, for instance, in optimal control.  Let
\begin{equation}
H(t,y,\overline{\lambda}) \coloneqq \left(\overline{\lambda}(t)\right)^\top \fun(t,y)
\end{equation}
denote the Hamiltonian function of \eqref{eqn:model-system}, where $\overline{\lambda}$ is the continuous adjoint variable. From the Hamiltonian, state and adjoint dynamics are derived as
\begin{subequations}
\label{eq:Hamiltonian}%
\begin{align}
y' &= \parfrac{H(t,y,\overline{\lambda})}{\overline{\lambda}} = \fun(t,y), \\
\label{eqn:continuous-adj}
\overline{\lambda}' &= -\parfrac{H(t,y,\overline{\lambda})}{y} = -\bigl (\Jac(t,y)\bigr )^\top\,\overline{\lambda}, \qquad \overline{\lambda}(t_F) =  \outfull_y\bigl(y(t_F)\bigr),
\end{align}
\end{subequations}
where the Jacobians of the right hand side function and goal function are denoted by:
\begin{equation}
\label{eqn:rhs-jacobian}
\Jac(t,y) \coloneqq \fun_y(t,y) \coloneqq \parfrac{\fun(t,y)}{y}, \quad
Q_y(y) \coloneqq \frac{d Q(y)}{dy},
\end{equation}
 respectively. Then, the sensitivity of the continuous goal function \eqref{eq:output_functional} with respect to (perturbations of) the continuous state is given by
\begin{equation}
\label{eqn:sensitivity_cts}
\parfrac{\out\bigl( y(t_0) \bigr)}{y(t)} = \overline{\lambda}(t).
\end{equation}
In practice we discretize the differential equation \eqref{eqn:model-system} on a (possibly non-uniform) time grid:
\[
t_0 < t_1 < \cdots  < t_{\nsteps-1} < t_\nsteps =: t_F, \quad h_n \coloneqq t_{n+1}-t_n,
\quad h \coloneqq \max_{n} h_n,
\]
to obtain numerical solutions $y_n \approx y(t_n)$, and consider the goal function \eqref{eq:output_functional} evaluated at the numerical solution:
\begin{equation}
\label{eq:output_functional_discrete}
\outd\bigl(y_0\bigr) = \outfull\bigl(y_N\bigr). 
\end{equation}
Discrete adjoint variables $\lambda_n$ give the sensitivities of the numerical solution functional \eqref{eq:output_functional_discrete} with respect to perturbations in the numerical solution:
\begin{equation}
\label{eqn:sensitivity_dis}
\parfrac{\outd\bigl( y_0 \bigr)}{y_n} = \lambda_n.
\end{equation}

\subsection{Multiphysics systems and GARK discretizations}
%
As an abstract model for multiphysics problems we consider the additively-split system:
\begin{equation}
\label{eqn:split-ode}
y' = \fun(y) = \sum_{q=1}^\ncomp \fun\st{q}\left(y\right), \quad t_0 < t \le t_F, \quad y(t_0)=y_0 \in \mathbb{R}^d.
\end{equation}
Following \cite{Sandu_2015_GARK} one step of a GARK discretization method applied to \eqref{eqn:split-ode} reads:
\begin{subequations}
\label{eq:GARK_scheme}
\begin{align}
\label{eq:GARK_step}
y_{n+1} &= y_n + h_n \sum_{q=1}^\ncomp \sum_{i=1}^{s\st{q}} b_i\st{q}\, \fun\st{q}\big(T_{n,i},Y_{n,i}\st{q}\big), \\
\label{eq:GARK_stages}
Y_{n,i}\st{q} &= y_n + h_n \sum_{m=1}^\ncomp \sum_{j=1}^{s\st{m}} a_{i,j}\st{q,m}\, \fun\st{m}\big(T_{n,j},Y_{n,j}\st{m}\big),\\
T_{n,i} &= t_{n} + c_i\,h_n, \qquad q=1,\ldots,\ncomp, \quad  i=1,\ldots,s\st{q}.
\nonumber
\end{align}
\end{subequations}
For brevity, in the remainder of this section we denote:
\begin{equation}
\label{eqn:shorthand-fj}
\fun\st{q}_{n,i} \coloneqq \fun\st{q}\big(T_{n,i},Y_{n,i}\st{q}\big), \quad \Jac\st{q}_{n,i} \coloneqq \parfrac{\fun\st{q}}{y}\big(T_{n,i},Y_{n,i}\st{q}\big).
\end{equation}
The GARK scheme \eqref{eq:GARK_scheme} can be written in the following equivalent formulation using stage slopes:
\begin{subequations}
\label{eq:GARK-alt_scheme}
\begin{align}
\label{eq:GARK-alt_step}
y_{n+1} &= y_n + h_n \sum_{q=1}^\ncomp \sum_{i=1}^{s\st{q}} b_i\st{q}\, k_{n,i}\st{q}, \\
\label{eq:GARK-alt_stages}
k_{n,i}\st{q} &= \fun\st{q}\Bigl(T_{n,i},y_n + h_n \sum_{m=1}^\ncomp \sum_{j=1}^{s\st{m}} a_{i,j}\st{q,m}\, k_{n,j}\st{m}\Bigr)
,\\
\nonumber
&\qquad q=1,\ldots,\ncomp, \quad  i=1,\ldots,s\st{q}.
\end{align}
\end{subequations}

\subsection{Discrete adjoint GARK schemes}

Similar to \eqref{eqn:continuous-adj}, the continuous adjoint equation of the multiphysics system \eqref{eqn:split-ode} reads:
\begin{equation}
\label{eqn:split-continuous-adj}
\overline{\lambda}' = -\sum_{q=1}^\ncomp \bigl( \Jac\st{q}(t,y)\bigr)^\top\,\overline{\lambda}, \qquad \overline{\lambda}(t_F) =  \bigl(\outfull_y\bigl(y(t_F)\bigr) \bigr)^\top.
\end{equation}

We derive the discrete GARK adjoint in the following Lemma.
\begin{lemma}[Discrete adjoint of the GARK scheme]
\label{lem:costate_GARK_original}
The discrete adjoint of the GARK scheme \eqref{eq:GARK_scheme} with the goal function \eqref{eq:output_functional} reads:
\begin{subequations}
\begin{align}
\label{lem:costate_GARK_original_final}
\lambda_N &= Q_y^\top \big|_{y=y_N}, \\
\label{lem:costate_GARK_original_sol}
\lambda_n &= \lambda_{n+1} + \sum_{q=1}^\ncomp \sum_{i=1}^{s\st{q}} \theta_{n,i}\st{q}, \quad n=N-1, \dots, 0,\\
\label{lem:costate_GARK_original_stage}
\theta_{n,i}\st{q} &= h_n \Jac\stran{q}_{n,i} \left( b_i\st{q}\, \lambda_{n+1}  + \sum_{m=1}^\ncomp \sum_{j=1}^{s\st{m}}  a_{j,i}\st{m,q} \, \theta_{n,j}\st{m} \right), \quad   i=s\st{q},\ldots,1, \\
& \qquad q=\ncomp,\dots,1, \quad n=N-1, \dots, 0. \nonumber
\end{align}
\label{eq:costate_GARK_original}
\end{subequations}

\begin{proof}
Consider the discrete Lagrangian associated with the goal function \eqref{eq:output_functional}, evaluated at the numerical solution $y_N$, and the constraints posed by the GARK evolution equations \eqref{eq:GARK_scheme}:
\begin{multline}
\label{eq:lagrangian_GARK}
\Lag =   \outfull(y_\nsteps) + \sum_{n=0}^{\nsteps-1} \lambda_{n+1}^\top \left(y_{n+1}-y_n - h_n \sum_{r=1}^\ncomp \sum_{i=1}^{s\st{r}} b_i\st{r} \,\fun\st{r}_{n,i} \right)  \\  +  \sum_{n=0}^{\nsteps-1}  \sum_{r=1}^\ncomp\sum_{k=1}^{s\st{r}} \theta\stran{r}_{n,k} \left (  Y_{n,k}\st{r} - y_n - h_n \sum_{m=1}^\ncomp \sum_{j=1}^{s\st{m}} a_{k,j}\st{r,m}\, \fun\st{m}_{n,j} \right),
\end{multline}
where we explicitly introduced Lagrange multipliers for each of the stage equations. 
We compute
\begin{equation}
\parfrac{\Lag}{y_n} = 
\begin{cases}
Q_y(y_N) - \lambda_N^\top, & n = N, \\
\lambda_n^\top - \lambda_{n+1}^\top - \sum_{q=1}^\ncomp \sum_{i=1}^{s\st{q}} \theta\stran{q}_{n,i}, & n < N,
\end{cases}
\end{equation}
and setting it to zero leads to \eqref{lem:costate_GARK_original_final} and \eqref{lem:costate_GARK_original_sol}, respectively. Similarly, equating the derivative of the Lagrangian \eqref{eq:lagrangian_GARK} with respect to $Y_{n,i}\st{q}$ to zero yields \eqref{lem:costate_GARK_original_stage}.
\end{proof}
\end{lemma}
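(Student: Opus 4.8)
The plan is to derive the discrete adjoint as the stationarity conditions of the Lagrangian associated with the goal function subject to the GARK evolution constraints. The standard recipe for discrete adjoints of Runge--Kutta-type methods is to (i) introduce Lagrange multipliers $\lambda_{n+1}$ for each step update \eqref{eq:GARK_step} and multipliers $\theta_{n,k}\st{r}$ for each stage equation \eqref{eq:GARK_stages}, forming a scalar Lagrangian $\Lag$; (ii) require that $\Lag$ be stationary with respect to all solution and stage variables, since at a solution of the forward equations the constraint terms vanish and $\Lag$ coincides with the goal function $\outfull(y_N)$; and (iii) read off the adjoint recurrences from these stationarity conditions. I would begin by writing the Lagrangian exactly as in \eqref{eq:lagrangian_GARK}, being careful that the multipliers for the stage equations are attached to the residual $Y_{n,k}\st{r} - y_n - h_n \sum_{m,j} a_{k,j}\st{r,m}\,\fun\st{m}_{n,j}$.

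Next I would compute the partial derivatives. Differentiating $\Lag$ with respect to the step value $y_n$ is the easier computation: $y_n$ appears in the $n$-th step constraint (with coefficient $+1$ through $y_{n+1}$ at index $n-1$ and with coefficient $-1$ through the $-y_n$ term at index $n$), in the final term $\outfull(y_N)$ when $n=N$, and in every stage constraint at step $n$ through the $-y_n$ term. Collecting these yields the two cases displayed in the proof: for $n=N$ the terminal condition $Q_y(y_N)-\lambda_N^\top=0$, giving \eqref{lem:costate_GARK_original_final}; and for $n<N$ the recurrence $\lambda_n^\top=\lambda_{n+1}^\top+\sum_{q,i}\theta\stran{q}_{n,i}$, giving \eqref{lem:costate_GARK_original_sol}.

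The more delicate computation is differentiating with respect to the internal stage $Y_{n,i}\st{q}$, which produces \eqref{lem:costate_GARK_original_stage}. Here the stage variable enters in three places: directly through its own stage residual $Y_{n,k}\st{r}$ (contributing the multiplier $\theta\stran{q}_{n,i}$ itself), through the step update \eqref{eq:GARK_step} via the term $-h_n b_i\st{q}\fun\st{q}_{n,i}$ (contributing $-h_n b_i\st{q}\,\lambda_{n+1}^\top \Jac\st{q}_{n,i}$), and through every other stage residual via the term $-h_n a_{k,j}\st{r,m}\fun\st{m}_{n,j}$ (contributing $-h_n \sum_{m,j} \theta\stran{m}_{n,j}\, a_{j,i}\st{m,q}\,\Jac\st{q}_{n,i}$, where I must track the chain rule through $\fun\st{m}_{n,j}$ whose argument depends on $Y_{n,i}\st{q}$ through the coefficient $a_{j,i}\st{m,q}$). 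Setting the sum to zero and transposing gives \eqref{lem:costate_GARK_original_stage}. The main obstacle, and the point requiring the most care, is the index bookkeeping: the forward coefficients $a_{k,j}\st{r,m}$ appear transposed in index and superscript as $a_{j,i}\st{m,q}$ in the adjoint stage equation, and the stage index runs backward ($i=s\st{q},\dots,1$ with $q=\ncomp,\dots,1$). I would verify that the transposition of both the subscripts and the method superscripts emerges correctly from swapping the roles of the differentiated stage and the summation stage in the chain rule, since this index reversal is precisely what makes the adjoint again a GARK scheme with transposed coefficients.
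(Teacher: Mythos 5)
Your proposal is correct and follows exactly the same route as the paper: form the Lagrangian \eqref{eq:lagrangian_GARK} with multipliers for the step and stage constraints, impose stationarity with respect to $y_n$ and $Y_{n,i}\st{q}$, and read off \eqref{lem:costate_GARK_original_final}--\eqref{lem:costate_GARK_original_stage}. Your explicit accounting of the three places where $Y_{n,i}\st{q}$ enters, and of the resulting transposition of indices and superscripts in $a_{j,i}\st{m,q}$, correctly fills in the step the paper only summarizes with ``similarly.''
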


\begin{remark}[Interpretation of adjoint variables]
For equality-constrained optimization the values of the Lagrange multipliers at a stationary point of the Lagrangian are the sensitivities of the goal function with respect to small perturbations in the corresponding constraints \cite{Nocedal_2006_book}. In our formulation \eqref{eq:lagrangian_GARK} the multiplier $\lambda_{n+1}$ is associated with the solution equation for $y_{n+1}$. Replacing the solution equation at the $n$-th step only with a slightly perturbed equation:
\[
\widehat{y}_{n+1} = y_n + h_n \sum_{r=1}^\ncomp \sum_{i=1}^{s\st{r}} b_i\st{r} \,\fun\st{r}_{n,i} + \Delta\,y_{n+1}
\]
leads to a slightly different final solution $\widehat{y}_{N}$ and a slightly different value of the goal function $Q(\widehat{y}_{N})$.
According to the sensitivity interpretation of Lagrange multipliers we have that
\[
\lambda_{n+1}^\top = \frac{d Q(y_{N})}{d y_{n+1}}, \qquad Q(\widehat{y}_{N}) - Q(y_{N}) \approx \lambda_{n+1}^\top\, \Delta\,y_{n+1}.
\]
Similarly, the adjoint variables $\theta_{n,k}\st{r}$ are associated with a stage equation, and therefore
\[ 
\theta\stran{\textsc{r}}_{n,k} = \frac{d Q(y_{N})}{dY_{n,k}\st{r}}.
\]
\end{remark}

\begin{remark}[Constraint qualification]
In an optimization context the constraints are posed by the scheme equations \eqref{eq:GARK_scheme}. Note that the constraints \eqref{eq:GARK_step}  involve a different set of variables $(y_n,y_{n+1})$ at each step $n$. If the method is irreducible none of the stage vector calculations \eqref{eq:GARK_stages} can be avoided while preserving the result. Consequently, the linear independence constraint qualification conditions are automatically satisfied for irreducible GARK methods \eqref{eq:GARK_scheme}.
\end{remark}

\begin{remark}[Alternative formulation of GARK discrete adjoints]
\label{com:alternative-GARK-adjoint}
\iflong
The corresponding Lagrangian is:
\begin{multline}
\label{eq:lagrangian_GARK-alt}
\Lag =   \outfull(y_\nsteps) + \sum_{n=0}^{\nsteps-1} \lambda_{n+1}^\top \left(y_{n+1}-y_n - h_n \sum_{r=1}^\ncomp \sum_{i=1}^{s\st{r}} b_i\st{r} k\st{r}_{n,i} \right)  \\  +  \sum_{n=0}^{\nsteps-1}  \sum_{r=1}^\ncomp\sum_{k=1}^{s\st{r}} \left(\mu_{n,k}\st{r}\right)^{\top} \left (  k_{n,k}\st{r} - \fun\st{r}\big(y_n + h_n \sum_{m=1}^\ncomp \sum_{j=1}^{s\st{m}} a_{k,j}\st{r,m} k_{n,j}\st{m}\big) \right),
\end{multline}
with the variation:
\begin{multline}
\label{eq:delta-lagrangian_GARK-alt}
\delta\Lag =   \outfull(y_\nsteps) + \sum_{n=0}^{\nsteps-1} \lambda_{n+1}^\top \left(\delta y_{n+1}-\delta y_n - h_n \sum_{r=1}^\ncomp \sum_{i=1}^{s\st{r}} b_i\st{r} \delta k\st{r}_{n,i} \right)  \\  
+  \sum_{n=0}^{\nsteps-1}  \sum_{r=1}^\ncomp\sum_{k=1}^{s\st{r}} \left(\mu_{n,k}\st{r}\right)^{\top} \left ( \delta k_{n,k}\st{r} - \Jac\st{r} \cdot \big(\delta y_n + h_n \sum_{m=1}^\ncomp \sum_{j=1}^{s\st{m}} a_{k,j}\st{r,m} \delta k_{n,j}\st{m}\big) \right).
\end{multline}
\fi
Similar to \eqref{eqn:shorthand-fj} let
\[
Y_{n,i}\st{q} = y_n + h_n \sum_{m=1}^\ncomp \sum_{j=1}^{s\st{m}} a_{i,j}\st{q,m} \,k_{n,j}\st{m}, \quad 
\Jac\st{q}_{n,i} = \parfrac{\fun\st{q}}{y}\big(T_{n,i},Y_{n,i}\st{q}\big).
\]
It can be seen that the modified discrete adjoint equations of Lemma~\eqref{lem:costate_GARK_original} corresponding to the stage formulation \eqref{eq:GARK-alt_scheme} read:
\begin{subequations}
\label{eq:costate_GARK_K}
\begin{align}
\label{lem:costate_GARK_K_final}
\lambda_N &= \bigl(Q_y(y_N) \bigr)^\top, \\
\label{lem:costate_GARK_K_sol}
\lambda_n &= \lambda_{n+1} + \sum_{q=1}^\ncomp \sum_{i=1}^{s\st{q}} \Jac\stran{q}_{n,i}\,\mu_{n,i}\st{q},
\quad n=N-1, \dots, 0,\\
\label{lem:costate_GARK_K_stage}
\mu_{n,i}\st{q} &= h_n\, b_i\st{q}\, \lambda_{n+1} + h_n \sum_{m=1}^\ncomp \sum_{j=1}^{s\st{q}}  a_{j,i}\st{m,q}\, \Jac\stran{m}_{n,j} \,\mu_{n,j}\st{m}, \quad i = s\st{q},\dots,1, \\
&\qquad q=1,\dots,P, \quad n=N-1, \dots, 0. \nonumber
\end{align}
\end{subequations}
We conclude that 
\begin{equation}
    \theta_{n,i}\st{q} \equiv \Jac\stran{q}_{n,i}\, \mu_{n,i}\st{q}.
\end{equation}
\end{remark}

\begin{remark}[Interpretation of the alternative adjoint variables]
Using the sensitivity interpretation of adjoint variables \cite{Nocedal_2006_book}, and the fact that the Lagrange multipliers $\mu_{n,j}\st{m}$ \eqref{lem:costate_GARK_K_stage} are associated with the stage slope equations \eqref{eq:GARK-alt_stages}, we have that
\begin{equation}
\label{eqn:mu-as-sensitivity}
\mu\stran{m}_{n,j}  = \frac{d Q(y_{N})}{d k_{n,j}\st{m}}.
\end{equation}
\end{remark}

As in the case of standard Runge-Kutta methods \cite{Sandu_2006_RKdadj}, if the weights are nonzero, we can reformulate the discrete GARK adjoint as another GARK method.

\begin{lemma}
If all $b_i\st{q}\neq 0$ then the following holds:
\begin{subequations}
\label{eq:GARK_costate}
\begin{align}
\label{eq:costate_step_GARK} 
\lambda_{n} &= \lambda_{n+1} + h_n \sum_{q=1}^\ncomp \sum_{i=1}^{s\st{q}} \bar{b}_i\st{q} \, \ell_{n,i}\st{q}, \\
\label{eq:costate_stages_GARK} 
\Lambda_{n,i}\st{q} &= \lambda_{n+1} + h_n \sum_{m=1}^\ncomp \sum_{j=1}^{s\st{m}} \bar{a}_{i,j}\st{q,m} \, \ell_j\st{m}, 
\quad i = s\st{q}, \dots, 1, \\
\label{eq:costate_stages_ell} 
\ell_{n,i}\st{q} &= \Jac\stran{q}_{n,i} \, \Lambda_{n,i}\st{q},
\end{align}
with 
\begin{equation}
\label{eq:GARK_costate_coefficients}
\bar{b}_i\st{q}=b_i\st{q}, \qquad \bar{a}_{i,j}\st{m,q}=\frac{b_j\st{q}\,a_{j,i}\st{q,m}}{b_i\st{m}}.
\end{equation}
\end{subequations}
%

\begin{proof}
Defining
\begin{equation}
\Lambda_{n,i}\st{q} \coloneqq \lambda_{n+1}  +  \sum_{m=1}^\ncomp\sum_{j=1}^{s\st{m}}  \frac{a_{j,i}\st{m,q}}{b_i\st{q}}\, \theta_{n,j}\st{m},\quad
\ell_{n,i}\st{q} \coloneqq \Jac\stran{q}_{n,i}\, \Lambda_{n,i}\st{q},
\end{equation}
and replacing it in \eqref{lem:costate_GARK_original_stage} leads to:
\begin{equation}
\theta_{n,i}\st{q} = h_n\,b_i\st{q} \, \underbrace{\Jac\stran{q}_{n,i}\,\Lambda_{n,i}\st{q}}_{\eqqcolon \ell\st{q}_{n,i}},
\end{equation}
which establishes \eqref{eq:costate_step_GARK}. Moreover:
\begin{equation}
\Lambda_{n,i}\st{q} \coloneqq \lambda_{n+1}  +  h_n \sum_{m=1}^\ncomp\sum_{j=1}^{s\st{m}}  \frac{a_{j,i}\st{m,q}\,b_j\st{m}}{b_i\st{q}}\, \ell\st{m}_{n,j},
\end{equation}
and \eqref{eq:costate_stages_GARK} follows.
\end{proof}
\end{lemma}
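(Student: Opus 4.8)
The plan is to derive the asserted GARK form directly from the discrete adjoint recursion of Lemma~\ref{lem:costate_GARK_original} by a change of stage variables that is admissible precisely because every weight $b_i\st{q}$ is nonzero. The guiding observation is that in any GARK scheme the solution update is a weighted sum of stage slopes: comparing the adjoint update \eqref{lem:costate_GARK_original_sol} with the target form \eqref{eq:costate_step_GARK} shows that the quantities $\theta_{n,i}\st{q}$ must play the role of the scaled slopes $h_n\,b_i\st{q}\,\ell_{n,i}\st{q}$. First I would therefore introduce new stage slopes $\ell_{n,i}\st{q}$ and stage values $\Lambda_{n,i}\st{q}$ so that this identification holds; this is exactly the point where the hypothesis $b_i\st{q}\neq 0$ is needed, since it legitimizes dividing through by $b_i\st{q}$.

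Concretely, I would define
\begin{equation}
\Lambda_{n,i}\st{q} \coloneqq \lambda_{n+1} + \sum_{m=1}^\ncomp\sum_{j=1}^{s\st{m}} \frac{a_{j,i}\st{m,q}}{b_i\st{q}}\,\theta_{n,j}\st{m}, \qquad \ell_{n,i}\st{q} \coloneqq \Jac\stran{q}_{n,i}\,\Lambda_{n,i}\st{q},
\end{equation}
chosen so that the bracketed term in the original stage relation \eqref{lem:costate_GARK_original_stage} equals $b_i\st{q}\,\Lambda_{n,i}\st{q}$. Substituting this back into \eqref{lem:costate_GARK_original_stage} collapses it to $\theta_{n,i}\st{q}=h_n\,b_i\st{q}\,\Jac\stran{q}_{n,i}\,\Lambda_{n,i}\st{q}=h_n\,b_i\st{q}\,\ell_{n,i}\st{q}$, which is precisely the relation between old and new increments. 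This identity simultaneously gives the stage-slope equation \eqref{eq:costate_stages_ell} and, inserted into \eqref{lem:costate_GARK_original_sol}, the step update \eqref{eq:costate_step_GARK} with $\bar b_i\st{q}=b_i\st{q}$.

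The last step is to eliminate the old variables $\theta_{n,j}\st{m}$ from the definition of $\Lambda_{n,i}\st{q}$. Inserting the just-derived identity $\theta_{n,j}\st{m}=h_n\,b_j\st{m}\,\ell_{n,j}\st{m}$ yields
\begin{equation}
\Lambda_{n,i}\st{q} = \lambda_{n+1} + h_n \sum_{m=1}^\ncomp\sum_{j=1}^{s\st{m}} \frac{a_{j,i}\st{m,q}\,b_j\st{m}}{b_i\st{q}}\,\ell_{n,j}\st{m},
\end{equation}
from which I would read off the stage coupling \eqref{eq:costate_stages_GARK} together with $\bar a_{i,j}\st{q,m}=b_j\st{m}\,a_{j,i}\st{m,q}/b_i\st{q}$; this is exactly \eqref{eq:GARK_costate_coefficients} after relabeling the partition indices $q\leftrightarrow m$.

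I do not anticipate a genuine obstacle here, as the argument is an exact change of variables rather than an analytic estimate. The only real care required is the index bookkeeping: the adjoint coupling coefficient is assembled from the transposed entry $a_{j,i}\st{m,q}$ (stage indices swapped, partition superscripts swapped) and rescaled by the weight ratio $b_j\st{m}/b_i\st{q}$, so I would carefully verify the placement of partition superscripts and the $i\leftrightarrow j$ swap against \eqref{lem:costate_GARK_original_stage} to ensure the two partitions are not accidentally interchanged when matching the final form \eqref{eq:GARK_costate_coefficients}.
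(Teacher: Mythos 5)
Your proposal is correct and follows essentially the same route as the paper: the same definition of $\Lambda_{n,i}\st{q}$ and $\ell_{n,i}\st{q}$ via division by $b_i\st{q}$, the same collapse of \eqref{lem:costate_GARK_original_stage} to $\theta_{n,i}\st{q}=h_n\,b_i\st{q}\,\ell_{n,i}\st{q}$, and the same back-substitution to read off the transposed, weight-rescaled coefficients \eqref{eq:GARK_costate_coefficients}. Your closing remark about the index relabeling $q\leftrightarrow m$ is exactly the bookkeeping the paper leaves implicit.
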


\subsection{An example of a discrete adjoint GARK scheme}
\label{subsec:discrete_adjoint_GARK_example}
%
Consider the following two-stage IMEX GARK method \cite[Example 9]{Sandu_2015_GARK}. It is internally consistent, second order, and stiffly accurate, with one free parameter $\alpha$ and $\gamma = 1 \pm \sqrt{2}/{2}$. The choice $\alpha=\gamma$ leads to the same weights for both schemes. The Butcher tableau reads:
\begin{equation}
\label{eqn:GARK-butcher-imex22}
\begin{array}{c | cc | cc}
c_1\st{\textsc{e}} &  0 & 0 & 0 &0   \\
c_2\st{\textsc{e}} & a_{2,1}\st{\textsc{e},\textsc{e}} & 0 & a_{2,1}\st{\textsc{e},\textsc{i}} & 0 \\
\hline
c_1\st{\textsc{i}} & a_{1,1}\st{\textsc{i},\textsc{e}} & 0 & a_{1,1}\st{\textsc{i},\textsc{i}}  & 0 \\
c_2\st{\textsc{i}} &  a_{2,1}\st{\textsc{i},\textsc{e}} & a_{2,2}\st{\textsc{i},\textsc{e}}  & a_{2,1}\st{\textsc{i},\textsc{i}} & a_{2,2}\st{\textsc{i},\textsc{i}}  \\
\hline
1 & b_{1}\st{\textsc{e}} & b_{2}\st{\textsc{e}}  & b_{1}\st{\textsc{i}} & b_{2}\st{\textsc{i}}   \\
\end{array}
~\equiv~
\begin{array}{c | cc | cc}
0 &  0 & 0 & 0 &0   \\
1/(2 \alpha) & 1/(2 \alpha) & 0 & 1/(2 \alpha) & 0 \\
\hline
\gamma & \gamma & 0 & \gamma  & 0 \\
1 &  1-\alpha & \alpha  & 1-\gamma & \gamma  \\
\hline
1 & 1-\alpha & \alpha  & 1-\gamma & \gamma   \\
\end{array}.
\end{equation}
The numerical solution proceeds by computing the first explicit stage, the first implicit stage, the second explicit stage, and the second implicit stage, in this order:
\begin{equation}
\label{eqn:GARK-imex22}
\begin{split}
Y_{n,1}\st{\textsc{e}} &= y_{n},  \\
Y_{n,1}\st{\textsc{i}} &= y_{n} + h \, a_{1,1}\st{\textsc{i},\textsc{e}} \, \fun\st{\textsc{e}}_{n,1}  + h \,  a_{1,1}\st{\textsc{i},\textsc{i}}  \, \fun\st{\textsc{i}}_{n,1},  \\
Y_{n,2}\st{\textsc{e}} &= y_{n} + h \, a_{2,1}\st{\textsc{e},\textsc{e}} \, \fun\st{\textsc{e}}_{n,1}  + h \, a_{2,1}\st{\textsc{e},\textsc{i}} \, \fun\st{\textsc{i}}_{n,1}, \\
Y_{n,2}\st{\textsc{i}} &= y_{n} + h \, a_{2,1}\st{\textsc{i},\textsc{e}} \, \fun\st{\textsc{e}}_{n,1} + h \, a_{2,1}\st{\textsc{i},\textsc{i}} \, \fun\st{\textsc{i}}_{n,1} + h \, a_{2,2}\st{\textsc{i},\textsc{e}} \, \fun\st{\textsc{e}}_{n,2} + h \, a_{2,2}\st{\textsc{i},\textsc{i}} \, \fun\st{\textsc{i}}_{n,2}, \\
y_{n+1} &= y_{n} + h \, b_{1}\st{\textsc{e}} \, \fun\st{\textsc{e}}_{n,1} + h \,  b_{1}\st{\textsc{i}} \, \fun\st{\textsc{i}}_{n,1}  + h \,  b_{2}\st{\textsc{e}} \, \fun\st{\textsc{e}}_{n,2} + h \,  b_{2}\st{\textsc{i}} \, \, \fun\st{\textsc{i}}_{n,2},
\end{split}
\end{equation}
 where $\fun\st{\textsc{e}}_{n,i} \coloneqq \fun\st{\textsc{e}}(Y_{n,i}\st{\textsc{e}})$ and $\fun\st{\textsc{i}}_{n,i} \coloneqq \fun\st{\textsc{i}}(Y_{n,i}\st{\textsc{i}})$.
We note that $y_{n+1}=Y_2\st{\textsc{i}}$, which is precisely the GARK-stiff-accuracy property built into the method.


We first compute the discrete adjoint directly, via variational calculus. The first variation of the scheme  \eqref{eqn:GARK-imex22} reads
\begin{equation*}
\label{eqn:GARK-imex22-tlm}
\begin{split}
\frac{d y_{n+1}}{d y_n}
&= \Id_d +
\begin{bmatrix} h \, b_{1}\st{\textsc{e}} \Jac\st{\textsc{e}}_{n,1} & h \,  b_{1}\st{\textsc{i}} \Jac\st{\textsc{i}}_{n,1} & h \,  b_{2}\st{\textsc{e}} \Jac\st{\textsc{e}}_{n,2} & h \,  b_{2}\st{\textsc{i}} \, \Jac\st{\textsc{i}}_{n,2} \end{bmatrix}^\top \cdot \\
& \ \cdot \begin{bmatrix}
\Id_d &&& \\
-h \, a_{1,1}\st{\textsc{i},\textsc{e}} \Jac\st{\textsc{e}}_{n,1}  & \Id_d -h \,  a_{1,1}\st{\textsc{i},\textsc{i}}  \Jac\st{\textsc{i}}_{n,1} & &  \\
- h \, a_{2,1}\st{\textsc{e},\textsc{e}} \Jac\st{\textsc{e}}_{n,1}  & -h \, a_{2,1}\st{\textsc{e},\textsc{i}} \Jac\st{\textsc{i}}_{n,1} &\Id_d  & \\
- h \, a_{2,1}\st{\textsc{i},\textsc{e}} \Jac\st{\textsc{e}}_{n,1} & -h \, a_{2,1}\st{\textsc{i},\textsc{i}} \Jac\st{\textsc{i}}_{n,1} & -h \, a_{2,2}\st{\textsc{i},\textsc{e}} \Jac\st{\textsc{e}}_{n,2} & \Id_d -h \, a_{2,2}\st{\textsc{i},\textsc{i}} \Jac\st{\textsc{i}}_{n,2}
\end{bmatrix}^{-1}\, \mathbf{1}_{4d},
\end{split}
\end{equation*}
where $\Id_d$ denotes the $\mathbb{R}^{d\times d}$ identity matrix and $\mathbf{1}_{4d} \in \mathbb{R}^{sd}$ is a vector of ones. 
The discrete adjoint equation becomes:
\begin{subequations}
\label{eqn:GARK-imex22-adj}
\begin{equation}
\label{eqn:GARK-imex22-adj-sol}
\lambda_{n} = \left(\frac{d y_{n+1}}{d y_n}\right)^\top\, \lambda_{n+1} = \lambda_{n+1} + \mathbf{1}_{4d}^\top \, \boldsymbol{\theta} = \lambda_{n+1} + \theta_{n,1}\st{\textsc{e}} + \theta_{n,1}\st{\textsc{i}} + \theta_{n,2}\st{\textsc{e}} + \theta_{n,2}\st{\textsc{i}},
\end{equation}
where
\begin{align*}
\begin{split}
&\boldsymbol{\theta}=
\begin{bmatrix} \theta_{n,1}\st{\textsc{e}} \\  \theta_{n,1}\st{\textsc{i}} \\ \theta_{n,2}\st{\textsc{e}} \\  \theta_{n,2}\st{\textsc{i}} \end{bmatrix}  =\\
& 
\!\! \begin{bmatrix}
\Id_d & -h \, a_{1,1}\st{\textsc{i},\textsc{e}} \Jac\stran{\textsc{e}}_{n,1} &- h \, a_{2,1}\st{\textsc{e},\textsc{e}} \Jac\stran{\textsc{e}}_{n,1}& - h \, a_{2,1}\st{\textsc{i},\textsc{e}} \Jac\stran{\textsc{e}}_{n,1}\\
 & \Id_d -h \,  a_{1,1}\st{\textsc{i},\textsc{i}}  \Jac\stran{\textsc{i}}_{n,1} & -h \, a_{2,1}\st{\textsc{e},\textsc{i}} \Jac\stran{\textsc{i}}_{n,1} & -h \, a_{2,1}\st{\textsc{i},\textsc{i}} \Jac\stran{\textsc{i}}_{n,1} \\
  & &\Id_d  & -h \, a_{2,2}\st{\textsc{i},\textsc{e}} \Jac\stran{\textsc{e}}_{n,2} \\
 &  &  & \Id_d -h \, a_{2,2}\st{\textsc{i},\textsc{i}} \Jac\stran{\textsc{i}}_{n,2}
\end{bmatrix}^{-1}\,\!\!
\begin{bmatrix} h \, b_{1}\st{\textsc{e}} \Jac\stran{\textsc{e}}_{n,1}\,\lambda_{n+1} \\ h \,  b_{1}\st{\textsc{i}} \Jac\stran{\textsc{i}}_{n,1}\,\lambda_{n+1} \\ h \,  b_{2}\st{\textsc{e}} \Jac\stran{\textsc{e}}_{n,2}\,\lambda_{n+1} \\ h \,  b_{2}\st{\textsc{i}} \, \Jac\stran{\textsc{i}}_{n,2}\,\lambda_{n+1} \end{bmatrix}. 
\end{split}
\end{align*}
We solve the upper triangular system by backward substitution as follows:
\begin{equation}
\label{eqn:GARK-imex22-adj-stage}
\begin{split}
& \bigl( \Id_d -h \, a_{2,2}\st{\textsc{i},\textsc{i}} \Jac\stran{\textsc{i}}_{n,2} \bigr)\,\theta_{n,2}\st{\textsc{i}} \! =\! h \,  \Jac\stran{\textsc{i}}_{n,2} \!\!\left( b_{2}\st{\textsc{i}} \,\lambda_{n+1} \right), \\
& \theta_{n,2}\st{\textsc{e}} \! =\! h \,\Jac\stran{\textsc{e}}_{n,2}\!\!\left(  b_{2}\st{\textsc{e}} \lambda_{n+1} + a_{2,2}\st{\textsc{i},\textsc{e}} \,\theta_{n,2}\st{\textsc{i}} \right), \\
& \bigl( \Id_d -h \, a_{1,1}\st{\textsc{i},\textsc{i}} \Jac\stran{\textsc{i}}_{n,1} \bigr)\,\theta_{n,1}\st{\textsc{i}}\! =\! h \, \Jac\stran{\textsc{i}}_{n,1} \!\! \left(  b_{1}\st{\textsc{i}}\lambda_{n+1} +  a_{2,1}\st{\textsc{e},\textsc{i}} \,\theta_{n,2}\st{\textsc{e}}  + a_{2,1}\st{\textsc{i},\textsc{i}} \,\theta_{n,2}\st{\textsc{i}} \right), \\
& \theta_{n,1}\st{\textsc{e}}\! =\! h \, \Jac\stran{\textsc{e}}_{n,1} \!\!\left( b_{1}\st{\textsc{e}} \lambda_{n+1}
\!+\! a_{1,1}\st{\textsc{i},\textsc{e}}\,\theta_{n,1}\st{\textsc{i}} \!+\! a_{2,1}\st{\textsc{e},\textsc{e}}\,\theta_{n,2}\st{\textsc{e}}
\!+\!a_{2,1}\st{\textsc{i},\textsc{e}} \,\theta_{n,2}\st{\textsc{i}} \right).
\end{split}
\end{equation}
\end{subequations}
Equations \eqref{eqn:GARK-imex22-adj} represent the particular form the discrete adjoint equations \eqref{eq:costate_GARK_original} take for our method \eqref{eqn:GARK-imex22}. We note that the order in which the adjoint stages are evaluated in \eqref{eqn:GARK-imex22-adj-stage} is exactly the reverse of the order of evaluation of forward stages \eqref{eqn:GARK-imex22}. Moreover, implicit adjoint stages in \eqref{eqn:GARK-imex22-adj} correspond to implicit forward stages in \eqref{eqn:GARK-imex22}, and the same holds for explicit stages. Forward stages $Y_{n,i}$ need to be computed from \eqref{eqn:GARK-imex22} and stored, as the discrete adjoint formulation \eqref{eqn:GARK-imex22-adj} includes the Jacobians evaluated at these stage values.

Assuming all the weights are nonzero we scale each discrete adjoint stage by the corresponding $b$ coefficient. 
\iflong
The system \eqref{eqn:GARK-imex22-adj-sol}--\eqref{eqn:GARK-imex22-adj-stage} becomes:
\begin{equation*}
\begin{split}
\bigl( \Id_d -h \, a_{2,2}\st{\textsc{i},\textsc{i}} \Jac\st{\textsc{i}}_{n,2},\!^\top \bigr)\,\frac{\theta_{n,2}\st{\textsc{i}}}{b_{2}\st{\textsc{i}}} &= h \,  \Jac\st{\textsc{i}}_{n,2},\!^\top\, \,\lambda_{n+1}, \\
\frac{\theta_{n,2}\st{\textsc{e}}}{ b_{2}\st{\textsc{e}} } &= h \,\Jac\st{\textsc{e}}_{n,2},\!^\top\,\left( \lambda_{n+1} + \frac{b_{2}\st{\textsc{i}}\,a_{2,2}\st{\textsc{i},\textsc{e}}}{ b_{2}\st{\textsc{e}} } \,\frac{\theta_{n,2}\st{\textsc{i}}}{b_{2}\st{\textsc{i}}} \right), \\
\bigl( \Id_d -h \, a_{1,1}\st{\textsc{i},\textsc{i}} \Jac\st{\textsc{i}}_{n,1}\,\!^\top \bigr)\,\frac{\theta_{n,1}\st{\textsc{i}}}{b_{1}\st{\textsc{i}}} &= h \, \Jac\st{\textsc{i}}_{n,1}\,\!^\top\, \left(  \lambda_{n+1} +  \frac{b_{2}\st{\textsc{e}}\,a_{2,1}\st{\textsc{e},\textsc{i}}}{b_{1}\st{\textsc{i}}} \,\frac{\theta_{n,2}\st{\textsc{e}}}{b_{2}\st{\textsc{e}}}  + \frac{b_{2}\st{\textsc{i}}\,a_{2,1}\st{\textsc{i},\textsc{i}}}{b_{1}\st{\textsc{i}}} \,\frac{\theta_{n,2}\st{\textsc{i}}}{b_{2}\st{\textsc{i}}} \right), \\
\frac{\theta_{n,1}\st{\textsc{e}}}{b_{1}\st{\textsc{e}}} &= h \, \Jac\st{\textsc{e}}_{n,1}\,\!^\top\, \left( \lambda_{n+1}
+ \frac{b_{1}\st{\textsc{i}}\,a_{1,1}\st{\textsc{i},\textsc{e}}}{b_{1}\st{\textsc{e}}}\,\frac{\theta_{n,1}\st{\textsc{i}}}{b_{1}\st{\textsc{i}}} + \right .\\
\hspace*{9em} & +\left .\frac{b_{2}\st{\textsc{e}}\,a_{2,1}\st{\textsc{e},\textsc{e}}}{b_{1}\st{\textsc{e}}}\,\frac{\theta_{n,2}\st{\textsc{e}}}{b_{2}\st{\textsc{e}}}
+\frac{b_{2}\st{\textsc{i}}\,a_{2,1}\st{\textsc{i},\textsc{e}}}{b_{1}\st{\textsc{e}}} \,\frac{\theta_{n,2}\st{\textsc{i}}}{b_{2}\st{\textsc{i}}} \right), \\
\lambda_{n} & = \lambda_{n+1} + b_{1}\st{\textsc{e}}\,\frac{\theta_{n,1}\st{\textsc{e}}}{b_{1}\st{\textsc{e}}} + b_{1}\st{\textsc{i}}\,\frac{\theta_{n,1}\st{\textsc{i}}}{b_{1}\st{\textsc{i}}} + b_{2}\st{\textsc{e}}\,\frac{\theta_{n,2}\st{\textsc{e}}}{b_{2}\st{\textsc{e}}} + b_{2}\st{\textsc{i}}\,\frac{\theta_{n,2}\st{\textsc{i}}}{b_{2}\st{\textsc{i}}}.
\end{split}
\end{equation*}
\fi
Using the notation \eqref{eq:GARK_costate_coefficients} and defining $\ell_{n,i}\st{k}=\theta_{n,i}\st{k}/b_{i}\st{k}$ the system \eqref{eqn:GARK-imex22-adj-sol}--\eqref{eqn:GARK-imex22-adj-stage} becomes:
\begin{equation}
\label{eqn:GARK-imex22-adj-scaled}
\begin{split}
&\bigl( \Id_d -h \, \overline{a}_{2,2}\st{\textsc{i},\textsc{i}} \Jac\stran{\textsc{i}}_{n,2} \bigr)\,\ell_{n,2}\st{\textsc{i}} = h \,  \Jac\stran{\textsc{i}}_{n,2}\, \,\lambda_{n+1}, \\
&\ell_{n,2}\st{\textsc{e}} = h \,\Jac\stran{\textsc{e}}_{n,2}\,\left( \lambda_{n+1} + \overline{a}_{2,2}\st{\textsc{e},\textsc{i}} \,\ell_{n,2}\st{\textsc{i}} \right), \\
&\bigl( \Id_d -h \, \overline{a}_{1,1}\st{\textsc{i},\textsc{i}} \Jac\stran{\textsc{i}}_{n,1}\, \bigr)\,\ell_{n,1}\st{\textsc{i}}  = h \, \Jac\stran{\textsc{i}}_{n,1}\, \left(  \lambda_{n+1} +  \overline{a}_{1,2}\st{\textsc{i},\textsc{e}} \,\ell_{n,2}\st{\textsc{e}}  + \overline{a}_{1,2}\st{\textsc{i},\textsc{i}} \,\ell_{n,2}\st{\textsc{i}} \right), \\
&\ell_{n,1}\st{\textsc{e}}  = h \, \Jac\stran{\textsc{e}}_{n,1}\, \left( \lambda_{n+1}
+ \overline{a}_{1,1}\st{\textsc{e},\textsc{i}}\,\ell_{n,1}\st{\textsc{i}} + \overline{a}_{1,2}\st{\textsc{e},\textsc{e}}\,\ell_{n,2}\st{\textsc{e}}
+\overline{a}_{1,2}\st{\textsc{e},\textsc{i}} \,\ell_{n,2}\st{\textsc{i}} \right), \\
& \lambda_{n}  = \lambda_{n+1} + b_{1}\st{\textsc{e}}\,\ell_{n,1}\st{\textsc{e}} + b_{1}\st{\textsc{i}}\,\ell_{n,1}\st{\textsc{i}} + b_{2}\st{\textsc{e}}\,\ell_{n,2}\st{\textsc{e}} + b_{2}\st{\textsc{i}}\,\ell_{n,2}\st{\textsc{i}}.
\end{split}
\end{equation}
Equations \eqref{eqn:GARK-imex22-adj-scaled} represent the particular form the discrete adjoint equations \eqref{eq:GARK_costate} take for our method \eqref{eqn:GARK-imex22}.


\section{Order of GARK discrete adjoints}
\label{sec:orderconvergence}

This analysis follows closely that for Runge-Kutta schemes \cite{Sandu_2006_RKdadj}. 
Define the {\it solution sensitivity matrix} of the system \eqref{eqn:split-ode} as:
\begin{equation}
\label{eq:sensitivity-matrix}
S_{t_2,t_1}\bigl(y(t_1)\bigr) \coloneqq \frac{d y(t_2)}{d y(t_1)} \in \mathbb{R}^{d \times d}, \quad t_2 \ge t_1,
\end{equation}
where we made explicit the dependency of the sensitivity matrix on the nonlinear trajectory about which it is computed.

\begin{theorem}[Order of discrete GARK adjoint scheme]
Assume that the system \eqref{eqn:split-ode} is smooth and has a smooth solution, and the goal function \eqref{eq:output_functional} is sufficiently smooth, such that the sensitivity matrix \eqref{eq:sensitivity-matrix} and the Jacobian of \eqref{eq:output_functional} are Lipschitz continuous:
\begin{subequations}
\label{eqn:assumptions}
\begin{equation}
\label{eqn:smoothness-assumption}
\Vert S_{t_2,t_1}\bigl(y\bigr) - S_{t_2,t_1}\bigl(z\bigr) \Vert \le \mathrm{L}\, \Vert y - z \Vert, \ \
\Vert Q_y\bigl(y\bigr) - Q_y\bigl(z\bigr) \Vert \le \mathrm{L}\, \Vert y - z \Vert, 
\end{equation}
for all $y,z$, and $t_2 \ge t_1$.
Assume that the forward GARK scheme \eqref{eq:GARK_scheme} has order $p$, and provides a numerical solution for \eqref{eqn:split-ode} that converges with order $p$:
\begin{equation}
\label{eqn:accuracy-assumption}
y_n = y(t_n) + \mathcal{O}\left( h^p \right).
\end{equation}
\end{subequations}
Then the discrete GARK adjoint solution \eqref{eq:costate_GARK_original} approximates the adjoint ODE solution \eqref{eqn:split-continuous-adj} with the same order of accuracy:
\[
\lambda_n = \overline{\lambda}(t_n) + \mathcal{O}\left( h^p \right).
\]
\end{theorem}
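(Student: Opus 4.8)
The plan is to reduce the order statement for the adjoint to the already-available order statement for the forward scheme, by exploiting the fact that the discrete adjoint is the transpose of the discrete tangent-linear (sensitivity) propagator. Concretely, chaining the one-step identity $\lambda_n = (dy_{n+1}/dy_n)^\top \lambda_{n+1}$ (verified explicitly for the example scheme in \eqref{eqn:GARK-imex22-adj-sol}, and implicit in Lemma~\ref{lem:costate_GARK_original}) from the terminal index down to $n$ gives the closed form
\begin{equation*}
\lambda_n = \left(\frac{dy_N}{dy_n}\right)^\top Q_y(y_N)^\top,
\end{equation*}
while the continuous adjoint admits the analogous representation $\overline{\lambda}(t_n) = S_{t_F,t_n}\bigl(y(t_n)\bigr)^\top\, Q_y\bigl(y(t_F)\bigr)^\top$ through the sensitivity matrix \eqref{eq:sensitivity-matrix} (this is precisely the content of the sensitivity interpretations \eqref{eqn:sensitivity_cts} and \eqref{eqn:sensitivity_dis}). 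It therefore suffices to compare two matrix--vector products: the discrete sensitivity $dy_N/dy_n$ against the continuous sensitivity $S_{t_F,t_n}$, and the discrete terminal adjoint $Q_y(y_N)^\top$ against the continuous one $Q_y(y(t_F))^\top$.

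First I would treat the terminal condition. By the accuracy assumption \eqref{eqn:accuracy-assumption}, $y_N = y(t_F) + \mathcal{O}(h^p)$, and the Lipschitz bound \eqref{eqn:smoothness-assumption} on $Q_y$ then yields $\Vert Q_y(y_N) - Q_y(y(t_F)) \Vert = \mathcal{O}(h^p)$.

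Second, and this is the crux, I would show that the discrete sensitivity matrix is an order-$p$ approximation of the continuous one, $dy_N/dy_n = S_{t_F,t_n}(y(t_n)) + \mathcal{O}(h^p)$. The discrete sensitivity is obtained by differentiating the forward step \eqref{eq:GARK_scheme} with respect to $y_n$; this produces exactly the GARK scheme applied to the variational (tangent-linear) equation $\delta y' = \Jac(t,y)\,\delta y$, with the stage Jacobians $\Jac\st{q}_{n,i}$ evaluated at the numerical stage values $Y_{n,i}\st{q}$. Since the forward method has order $p$ for general smooth systems, it has order $p$ for this linear variational system as well; the linearity and the boundedness of the Jacobians on the compact trajectory give uniform stability of the one-step propagators, so the local truncation errors of size $\mathcal{O}(h_k^{p+1})$ accumulate over $\mathcal{O}(1/h)$ steps to a global error $\mathcal{O}(h^p)$. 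The remaining discrepancy---that the Jacobians are evaluated along the numerical rather than the exact trajectory---is controlled by the Lipschitz continuity \eqref{eqn:smoothness-assumption} of $S_{t_2,t_1}$ in its trajectory argument together with $\Vert y_n - y(t_n) \Vert = \mathcal{O}(h^p)$, and hence also contributes only at order $p$.

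Finally I would combine the two estimates. Since both factors are uniformly bounded on the compact trajectory, the product expansion
\begin{equation*}
\lambda_n = \bigl(S_{t_F,t_n}^\top + \mathcal{O}(h^p)\bigr)\bigl(Q_y(y(t_F))^\top + \mathcal{O}(h^p)\bigr) = S_{t_F,t_n}^\top\,Q_y(y(t_F))^\top + \mathcal{O}(h^p) = \overline{\lambda}(t_n) + \mathcal{O}(h^p)
\end{equation*}
gives the claim. The main obstacle is the middle step: making rigorous that differentiating the GARK recursion yields a genuine GARK discretization of the variational system and that this discretization retains order $p$ uniformly in $n$, which is exactly where the Lipschitz hypotheses \eqref{eqn:smoothness-assumption} and the boundedness of the sensitivity propagators are indispensable.
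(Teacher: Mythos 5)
Your proposal is correct and follows essentially the same route as the paper: both arguments write $\lambda_n$ and $\overline{\lambda}(t_n)$ as transposed sensitivity matrices acting on the terminal gradient, establish $S^h_{N,n} = S_{t_F,t_n} + \mathcal{O}(h^p)$ by recognizing the differentiated GARK recursion as the GARK discretization of the tangent-linear model, and control the terminal-gradient and trajectory-argument discrepancies via the Lipschitz assumptions \eqref{eqn:smoothness-assumption} together with the forward convergence \eqref{eqn:accuracy-assumption}. Your final product expansion is just a repackaging of the paper's three-term difference decomposition, so no substantive divergence.
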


\begin{proof}
Consider the additively-split system  \eqref{eqn:split-ode}.
Infinitesimally small perturbations in the initial conditions of \eqref{eqn:split-ode} evolve in time according to the {\it tangent linear model}:
\begin{equation}
\label{eqn:split-tlm}
\delta y' =  \sum_{q=1}^\ncomp \Jac\st{q}\!(y)\, \delta y,\quad t_0 < t \le t_F, \quad \delta y(t_0)=\delta y_0.
\end{equation}
We have that $\delta y(t_2) = S_{t_2,t_1} \,\delta y(t_1)$, where the {\it solution sensitivity matrix} is defined in \eqref{eq:sensitivity-matrix}.
Note that the formulation \eqref{eqn:split-tlm} of the tangent linear model depends on the solution of the forward system \eqref{eqn:split-ode} $y(t)$, which is the argument where the Jacobians $\Jac\st{q}\left( y \right)$ are evaluated. Consequently the systems \eqref{eqn:split-ode} and \eqref{eqn:split-tlm} need to be advanced together forward in time. 

A numerical solution of  the combined system \eqref{eqn:split-ode}--\eqref{eqn:split-tlm} using a GARK method leads to \eqref{eq:GARK_scheme} and to the following solution of the tangent linear model:
\begin{subequations}
\label{eq:GARK_scheme_tlm}
\begin{align}
\delta y_{n+1} &= \delta y_n + h\, \sum_{q=1}^\ncomp \sum_{i=1}^{s\st{q}} b_i\st{q} \Jac\st{q}\big(Y_{n,i}\st{q}\big) \, \delta Y_{n,i}\st{q}, \label{eq:GARK_step_tlm} \\
\delta Y_{n,i}\st{q} &= \delta y_n + h\, \sum_{m=1}^\ncomp \sum_{j=1}^{s\st{m}} a_{i,j}\st{q,m} \Jac\st{m}\big(Y_{n,j}\st{m}\big) \, \delta Y_{n,j}\st{m},\quad \forall q, \forall i.
\label{eq:GARK_stages_tlm}
\end{align}
\end{subequations}
We have that $\delta y_{n_2} = S^h_{n_2,n_1}(y_{n_1}) \, \delta y_{n_1}$, where the {\it numerical solution sensitivity matrix} is: 
\begin{equation}
\label{eq:numerical-sensitivity-matrix}
S^h_{n_2,n_1}\bigl(y_{n_1}\bigr)\coloneqq \frac{d y_{n_2}}{d y_{n_1}} \in \mathbb{R}^{d \times d}, \quad n_2 \, n_1.
\end{equation}

We note that \eqref{eq:GARK_scheme_tlm} is the GARK scheme applied to the tangent linear model \eqref{eqn:split-tlm}. The stability of the GARK method applied to the tangent linear model is the same as the linear stability of the method applied to the nonlinear model \eqref{eqn:split-ode}. Given the order of accuracy and the convergence with order $p$ assumptions for the method we conclude that the solutions of the tangent linear system also converge with order $p$:
\[
 \delta y_n = \delta y(t_n) + \mathcal{O}\left(h^{p}\right).
\]
If the calculations are carried out starting from the same initial point $y_{n_1} = y(t_{1}) $ and the same unit vector $\delta y_{n_1} = \delta y(t_{1}) = e_i$ we obtain an order $p$ approximation of the $i$-th column of the sensitivity matrix \eqref{eq:sensitivity-matrix}: 
\begin{align}
\mathcal{O}\left(h^{p}\right) &= \delta y(t_{2}) - \delta y_{n_2} = \left( S_{t_{2},t_{1}}\bigl(y(t_{1})\bigr) - S^h_{n_2,n_1}\bigl(y(t_{1})\bigr)\right) \cdot e_i, 
\end{align}
where we assumed that the system is sufficiently smooth and used the assumption that the forward solution converges with order $p$. Since the approximation holds for any column $i$ we have that the numerical sensitivity matrices approximate the continuous ones with the order of the solution:
\begin{equation}
\label{eqn:S-approx-error}
S_{t_{2},t_{1}}\bigl(y\bigr) - S^h_{n_2,n_1}(y) =  \mathcal{O}\left(h^{p}\right) \quad \forall t_2 \ge t_1.
\end{equation}

The {\it adjoint model} corresponding to the system \eqref{eqn:split-ode} and the goal function \eqref{eq:output_functional} is defined as:
\begin{equation}
\label{eqn:split-adj}
\overline{\lambda}' =  -\sum_{q=1}^\ncomp \bigl(\Jac\st{q}\left( y \right) \bigr)^\top  \, \overline{\lambda},\quad t_F > t \ge t_0,\quad \overline{\lambda}(t_F)= \left(\outfull_y\bigl(y(t_F)\bigr)\right)^\top.
\end{equation}
From \eqref{eqn:sensitivity_cts} we have that:
\begin{equation}
\label{eq:continuous-adjoint-propagation}
\overline{\lambda}(t_n) = \left( \frac{d \outfull\bigl(y(t_F)\bigr)}{d y(t_n)} \right)^\top =  \left( \outfull_y\bigl(y(t_F)\bigr) \, \frac{d y(t_F)}{d y(t_n)} \right)^\top
= S_{t_F,t_n}^\top\bigl( y(t_n) \bigr)  \, \overline{\lambda}(t_F),
\end{equation}
and similarly,
\begin{equation}
\label{eq:discrete-adjoint-propagation}
\lambda_n 
= \bigl(S^h_{N,n}( y_n )\bigr)^\top \, \lambda_N = S^h_{N,n}( y_n ) \cdot \outfull_y\bigl(y_N\bigr).
\end{equation}
The difference between the discrete and continuous adjoints at time $t_n$ is:
%
\begin{align*}
&\lambda_n  - \overline{\lambda}(t_n) =  \bigl(S^h_{N,n}( y_n )\bigr)^\top \cdot \left( \outfull_y\bigl(y_N\bigr)  - \outfull_y\bigl(y(t_F)\bigr) \right)^\top \\
&\qquad + \bigl(S^h_{N,n}( y_n ) - S_{t_F,t_n}( y_n ) \bigr)^\top \,  \overline{\lambda}(t_F) 
 + \bigl(S_{t_F,t_n}( y_n ) - S_{t_F,t_n}( y(t_n) ) \bigr)^\top  \, \overline{\lambda}(t_F).
\end{align*}
%
The result follows from the smoothness assumption \eqref{eqn:smoothness-assumption}, the assumed convergence of the forward solution \eqref{eqn:accuracy-assumption}, and from the closeness of discrete and continuous sensitivity matrices \eqref{eqn:S-approx-error}.

\end{proof}


\section{Propagation of Discretization Errors in the GARK Framework}
\label{sec:impactoferrors}

In this section we derive estimates of the impact of individual discretization errors (in time or in space) onto the resulting error of the goal function \eqref{eq:output_functional}:
\begin{equation}
\label{eqn:qoi-error}
\mathcal{E} \coloneqq \outfull\bigl(y_N\bigr) - \outfull\bigl(y(t_\F)\bigr).
\end{equation}
To this end, we denote by $y(t)$ the reference solution, which represents either the time continuous solution (for error estimation in time) or a solution on a refined spatial grid, projected onto the current grid (for error estimation in space).

\subsection{Impact of temporal discretization errors}

Consider the GARK scheme \eqref{eq:GARK_scheme} written as a discrete step:
\begin{equation}
\label{eqn:gark-abstract-update}
y_{n+1} = y_{n} + h_n\, \Phi\bigl(h_n,y_n\bigr).
\end{equation}
The corresponding discrete adjoint equation reads:
\begin{equation}
\label{eqn:adjoint-abstract-update}
    \lambda_{n} = \lambda_{n+1} + h_{n}\, \Phi_y^\top(h_{n},y_{n}), \qquad
    \Phi_y(h_{n},y_{n}) \coloneqq \frac{\partial \Phi(h,y) }{\partial y}\Big|_{h=h_{n},y=y_{n}}.
\end{equation} 

Consider now the scheme started from the exact solution $y(t_n)$. It produces a solution $\hat{y}_{n+1}$ that differs from the exact solution by a temporal residual, equal to the local truncation error of the method:
\begin{subequations}
\label{eqn:temporal_residuals}
\begin{align}
\hat{y}_{n+1} - y(t_{n}) - h_n\, \Phi\bigl(h_n,y(t_n)\bigr) &= 0, \\
y(t_{n+1}) - y(t_{n}) - h_n\, \Phi\bigl(h_n,y(t_n)\bigr) &= \rest_{n+1}.
\end{align}
\end{subequations}
%
\begin{proposition}
If the Hessian of the goal function and the discrete step operator $\Phi$ are bounded, i.e., 
\[
\|\frac{d^2}{dy^2} Q(y) \| \leq C_1, \qquad \|\frac{\partial^2}{\partial y^2} \Phi(\cdot,y) \| \leq C_2,
\]
with $C_1,C_2>0$, the error in the goal function \eqref{eqn:qoi-error} is expressed as follows:
\begin{subequations}
\begin{equation}
\label{eqn:temporal-error-impact}
\mathcal{E} = \sum_{n=1}^{\nsteps} \lambda_n^\top  \, \rest_n + \mathcal{O}(h^{2p}),
\end{equation}
where 
\begin{equation}
\label{eqn:time-residuals}
\rest_n = y(t_{n}) - \hat{y}_{n}.
\end{equation}
\end{subequations}
\end{proposition}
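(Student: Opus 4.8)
The plan is to linearize the goal error \eqref{eqn:qoi-error} about the computed terminal state, reduce it to the terminal global state error contracted with the terminal adjoint, and then trade that state error for the accumulated local residuals by means of the discrete adjoint recursion. Writing $e_n \coloneqq y_n - y(t_n)$ for the global state error and Taylor expanding $\outfull$ about $y_\nsteps$, the bound $C_1$ on the goal Hessian gives
\[
\error = \outfull(y_\nsteps) - \outfull\bigl(y(t_\F)\bigr) = \lambda_\nsteps^\top e_\nsteps + \mathcal{O}\bigl(\Vert e_\nsteps\Vert^2\bigr),
\]
where I use $\lambda_\nsteps = \bigl(Q_y(y_\nsteps)\bigr)^\top$ from \eqref{lem:costate_GARK_original_final}. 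Since the forward method converges with order $p$ by \eqref{eqn:accuracy-assumption}, $\Vert e_\nsteps\Vert = \mathcal{O}(h^p)$, so the quadratic term is already $\mathcal{O}(h^{2p})$; it thus remains only to express $\lambda_\nsteps^\top e_\nsteps$ through the temporal residuals.

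Next I would set up the recursion for the global error. Subtracting the exact one-step identity \eqref{eqn:temporal_residuals} from the numerical step \eqref{eqn:gark-abstract-update} and linearizing $\Phi$ with the Hessian bound $C_2$ gives
\[
e_{n+1} = \frac{\partial y_{n+1}}{\partial y_n}\,e_n - \rest_{n+1} + \mathcal{O}\bigl(h\,\Vert e_n\Vert^2\bigr), \qquad \frac{\partial y_{n+1}}{\partial y_n} = \Id + h_n\,\Phi_y(h_n,y_n),
\]
the sign of the residual term reflecting the convention $e_n = y_n - y(t_n)$. Contracting this with $\lambda_{n+1}$ and invoking the discrete adjoint identity $\lambda_n^\top = \lambda_{n+1}^\top\,\partial y_{n+1}/\partial y_n$ embodied in \eqref{eqn:adjoint-abstract-update}, the propagation factor is absorbed into $\lambda_n^\top e_n$, leaving the telescoping increment
\[
\lambda_{n+1}^\top e_{n+1} - \lambda_n^\top e_n = -\,\lambda_{n+1}^\top\rest_{n+1} + \mathcal{O}\bigl(h\,\Vert e_n\Vert^2\bigr).
\]
Summing over $n=0,\dots,\nsteps-1$ and using the exact initial condition $e_0 = 0$ collapses the left-hand side to $\lambda_\nsteps^\top e_\nsteps$; combining with the expansion of $\error$ above and the definitions \eqref{eqn:qoi-error} and \eqref{eqn:time-residuals} then yields \eqref{eqn:temporal-error-impact}.

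The delicate point, and the \emph{main obstacle}, is to certify that the aggregate remainder is genuinely $\mathcal{O}(h^{2p})$ rather than the $\mathcal{O}(h^{p})$ one would fear from a naive global-error argument. Two ingredients are needed. First, the two Hessian bounds $C_1,C_2$ make every linearization error quadratic: the terminal expansion is quadratic in $e_\nsteps = \mathcal{O}(h^p)$, hence $\mathcal{O}(h^{2p})$, while each per-step remainder is $\mathcal{O}(h\,\Vert e_n\Vert^2)=\mathcal{O}(h^{2p+1})$. Second, because the per-step terms accumulate over $\nsteps = \mathcal{O}(h^{-1})$ steps, I must guarantee that neither the adjoints $\lambda_n$ nor the one-step Jacobians $\partial y_{n+1}/\partial y_n$ amplify these contributions: one needs a uniform bound $\Vert\lambda_n\Vert = \mathcal{O}(1)$ and uniform stability of the forward and transposed propagation on the finite window $[t_0,t_\F]$.

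These stability properties are exactly those implied by the order-$p$ convergence hypothesis \eqref{eqn:accuracy-assumption} on a finite interval; with them the summed per-step remainders satisfy $\nsteps\cdot\mathcal{O}(h^{2p+1}) = \mathcal{O}(h^{2p})$, which closes the estimate. A useful cross-check is the equivalent hybrid-trajectory telescoping, in which one interpolates between the purely numerical and purely exact trajectories one step at a time: the increment produced at step $n$ is precisely the injected residual $\rest_n$ propagated to $t_\F$ and read out by $\lambda_n$, reproducing the same per-step contributions $\lambda_n^\top\rest_n$ together with the same $\mathcal{O}(h^{2p})$ remainder budget.
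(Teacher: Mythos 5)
Your proof is correct and follows essentially the same route as the paper's: linearize $Q$ at $y_\nsteps$ (the Hessian bound making the remainder $\mathcal{O}(\Vert e_\nsteps\Vert^2)=\mathcal{O}(h^{2p})$), derive the one-step error recursion $e_{n+1}=(\Id+h_n\Phi_y)e_n-\rest_{n+1}+\mathcal{O}(h\Vert e_n\Vert^2)$, and use the adjoint transfer identity $\lambda_{n+1}^\top(\Id+h_n\Phi_y)=\lambda_n^\top$ to telescope $\lambda_\nsteps^\top e_\nsteps$ down to $e_0=0$; the paper performs the identical cancellation as a backward recursion from $n=\nsteps$ rather than as a summed telescoping identity. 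If anything you are more explicit than the paper about why the $\nsteps=\mathcal{O}(h^{-1})$ per-step remainders of size $\mathcal{O}(h^{2p+1})$ accumulate only to $\mathcal{O}(h^{2p})$, which indeed requires the uniform boundedness of the $\lambda_n$ and of the one-step propagators that you correctly flag.
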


\begin{remark}
In \eqref{eqn:temporal-error-impact} the local truncation residuals are of size $\Vert \rest_n \Vert \sim \mathcal{O}(h^{p+1})$. Assuming that the adjoint variables are of moderate size $\Vert \lambda_n \Vert \sim \mathcal{O}(1)$, the a posteriori error estimate is 
\[
\mathcal{E}^{\textsc{est}} = \sum_{n=1}^{\nsteps} \lambda_n^\top \, \rest_n \sim \mathcal{O}(h^{p}),
\]
and the error in this estimate is much smaller than the estimated quantity itself, $\mathcal{E} - \mathcal{E}^{\textsc{est}} \sim \mathcal{O}(h^{2p})$.
\end{remark}

\begin{proof}
The derivation is based on a Taylor expansion of the discrete step operator $\Phi$ and reads
\begin{align*}
\mathcal{E} &= \outfull\bigl(y_N\bigr) - \outfull\bigl(y(t_\F)\bigr) =  Q_y \bigl(y_N\bigr)\,\left( y_N - y(t_\F) \right) + \mathcal{O}\big(h^{2p}\big) \\
 &= \lambda_N^\top \, \left( y_N - y(t_N) \right) + \mathcal{O}\big(h^{2p}\big) \\
 &= \lambda_N^\top \, \left( y_N - \hat{y}_N + \hat{y}_N - y(t_\F) \right) + \mathcal{O}\big(h^{2p}\big) \\
 &= \lambda_N^\top \, \left( y_N - \hat{y}_N \right) + \lambda_N^\top \, \rest_N + \mathcal{O}\big(h^{2p}\big).
\end{align*}
Since 
\begin{equation*}
\Phi\bigl(h_{N-1},y_{N-1}\bigr) - \Phi\bigl(h_{N-1},y(t_{N-1})\bigr) = \Phi_y(h_{N-1},y_{N-1})\,\bigl(y_{N-1} - y\bigl(t_{N-1}\bigr)\bigr) + \mathcal{O}(h^{2 p}),
\end{equation*}
we obtain from \eqref{eqn:adjoint-abstract-update} that
\begin{align*}
\lambda_N^\top\, \bigl( y_N - \hat{y}_N \bigr) 
& \!\!= \!\!\lambda_N^\top\, \Bigl( y_{N-1} -  y(t_{N-1})  + h_{N-1}  \bigl( \Phi\bigl(h_{N-1},y_{N-1}\bigr) - \Phi(h_{N-1},y(t_{N-1})) \bigr)  \Bigr) \\
& = \lambda_N^\top\, \Bigl(\Id_d + h_{N-1}\,\Phi_y(h_{N-1},y_{N-1}) \Bigr)  \Bigl( y_{N-1} -  y(t_{N-1}) \Bigr) + \mathcal{O}(h^{2 p}) \\
&= \lambda_{N-1}^\top\,  \Bigl( y_{N-1} -  y(t_{N-1}) \Bigr) + \mathcal{O}(h^{2 p}), 
\end{align*}
where we have used the adjoint update step \eqref{eqn:adjoint-abstract-update}. Therefore
\begin{align*}
\mathcal{E} 
 &=  \lambda_{N-1}^\top\,  \Bigl( y_{N-1} -  y(t_{N-1}) \Bigr)  + \lambda_N^\top\,  \rest_N + \mathcal{O}\big(h^{2p}\big) \\
 &=  \lambda_{N-1}^\top\,  \Bigl( y_{N-1} -  \hat{y}_{N-1} \Bigr)  + \lambda_{N-1}^\top \,  \rest_{N-1} + \lambda_N^\top \,  \rest_N + \mathcal{O}\big(h^{2p}\big).
\end{align*}
Repeating the argument in a recursive manner yields the desired result
\begin{equation}
    \mathcal{E} = \lambda_0^\top \, \underbrace{(y_0 - y(t_0))}_{=0} + \sum_{n=1}^N \lambda_n^\top \, \rest_n + \mathcal{O}\big(h^{2p}\big).
\end{equation}
A similar recursive argument has been used in the context of multirate ODEs in \cite{estep2012posteriori}.
\end{proof}
The accumulation of temporal errors is illustrated in Figure \ref{fig:temporal-error}. Note that each adjoint variable $\lambda_n$ is evaluated along a different numerical solution started at the exact solution $y(t_{n-1})$. In practice we compute all adjoint variables along the same numerical solution started at $y(t_{0})$.


%
\begin{figure}
\centering
\includegraphics[width=0.75\textwidth]{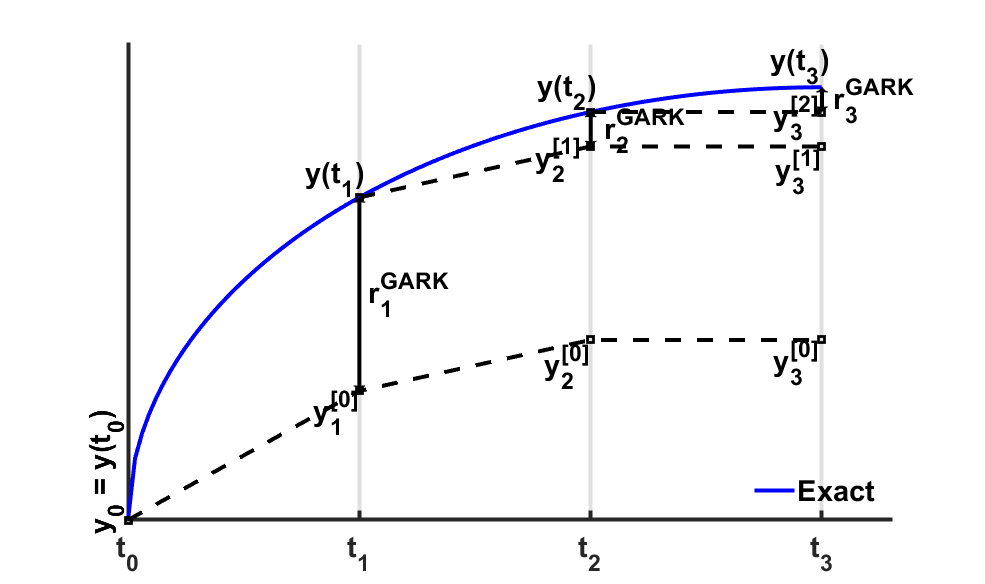}
\caption{Accumulation of temporal errors. Different numerical solutions, each started at the exact $y(t_n)$ for a different $n$, are shown.}
\label{fig:temporal-error}
\end{figure}

\subsection{Influence of spatial discretization errors}

The methodology put forth in this paper is not restricted to a specific type of differential equation. For the clarity of presentation we consider the advection-diffusion-reaction equation in more detail, as it allows for explicit examples of residuals and the obtained error estimates. The computational domain is denoted as $\Omega$ and is assumed to be sufficiently regular. In particular, we assume that its boundary can be well-approximated by polyhedrons.  
The advection-diffusion equation in continuous form reads
\begin{subequations}
\label{eqn:advection-diffusion-continuous}
\begin{align}
    u'(t,x) &= k(x)\,\Delta\,u(t,x) + a(x) \cdot \,\nabla\,u(t,x) + \phi(t,x), \  &&\forall (x,t) \in \Omega \times (t_0,t_{\text{F}}], \\ 
    u(t_0,x) &= u_0(x), \ &&\forall x \in \Omega, \\
    u(t,x) &= 0, \  &&\forall (x,t) \in \partial \Omega \times (t_0,t_{\text{F}}].
\end{align}
\end{subequations}
This equation is a prototype for a multiphysics process, where reaction, diffusion and advection phenomena may take place on largely different temporal and spatial scales. The following discussion is based on the space
\[
\mathcal{H} = H^1_0(\Omega) = \{u \in L^2(\Omega) \ | \ \nabla u \in (L^2(\Omega))^3, u |_{\partial \Omega} = 0 \}.
\]
%
We seek for solutions $u(t) \in \mathcal{H}$ subject to
\begin{multline}
    \label{eqn:advection-diffusion-weak}
    \int_{\Omega} u'(t,x)\, v(x) \, dx = \int_{\Omega} k(x)\,\nabla\,u(t,x)\cdot\,\nabla\,v(x)\,dx  + \\
    \int_{\Omega} a(x)\cdot \,\nabla\,u(t,x)\,v(x)\,dx + \int_{\Omega} \phi(t,x)\,v(x)\,dx,
\end{multline}
for all test functions $v \in \mathcal{H}$. We consider an orthonormal basis $\{\varphi_k\}_{k=1}^\infty$ of $\mathcal{H}$. The exact solution of \eqref{eqn:advection-diffusion-weak} can then be written as
\begin{equation}
    u(t,x) = \sum_{k=1}^\infty u_k(t) \, \varphi_k(x)
\end{equation}
and a FEM discretization in a Galerkin setting is obtained by approximating the solution by the truncated sum
\begin{equation}
    u(t,x) \approx u^{\textsc{fem}}(t,x) := \sum_{k=1}^{N^{\textsc{fem}}} y_k(t) \,\varphi_k(x).
\end{equation}
We use the same symbol $u$ for both the continuous solution $u(t,x)$ and the vector $u = (u_1,u_2,\ldots,u_{N^{\textsc{fem}}})^\top$, as the meaning can always be inferred from the specific context. 
The FEM solution $u^{\textsc{fem}}$ is identified with the vector $y=(y_1,y_2,\ldots,y_{N^{\textsc{fem}}})^\top$. When the ODE \eqref{eqn:ode} is the result of such a FEM discretization it has an implicit form, where the left-hand-side is multiplied by a non-singular mass matrix as
\begin{equation}
\label{eqn:ode-implicit}
\Mass\,y' = \fun^{\textsc{fem}}(t,y), \quad y(t_0)=y_0,
\end{equation}
where the right-hand-side is given as $\fun^{\textsc{fem}}(\cdot,y) = \Diffusion y + \Advection y + \source$. In this expression, the mass, diffusion, and advection matrices have  entries
\begin{align*}
m_{i,k} &:= \int_{\Omega}\varphi_k(x) \,\varphi_i(x)\,dx, \\    
d_{i,k} &:= \int_{\Omega}k(x)\,\nabla \varphi_k(t,x)\,\cdot\,\nabla \varphi_i(x) \,dx, \\
a_{i,k} &:= \int_{\Omega} \,a(x)\cdot\nabla \varphi_k(t,x)\,\varphi_i(x)\,dx, \qquad i,k= 1,\ldots, N^{\textsc{fem}}.
\end{align*}
Different partitions lead to different forms of the implicit ODE \eqref{eqn:ode-implicit}. When the exact PDE solution $u(t,x)$ is inserted into the FEM discretization, equation \eqref{eqn:ode-implicit} has a remaining residual {\it coming from the spatial discretization only}:
\begin{equation}
\label{eqn:ode-implicit-residual}
\begin{split}
\Mass\,u' &= \fun^{\textsc{fem}}(t,u) + \resfem(t,u), \\
u' &= \Mass^{-1}\,\fun^{\textsc{fem}}(t,u) + \Mass^{-1}\,\resfem(t,u) = \fun(t,u) + \resx(t,u).
\end{split}
\end{equation}
Indeed, if we combine 
\begin{equation}
    \sum_{k=1}^\infty m_{i,k} \,u_k' = \sum_{k=1}^\infty d_{i,k}\,u_k(t) + \sum_{k=1}^\infty a_{i,k}\,u_k(t) + s_i(t), \quad i=1,\dots,N^{\textsc{fem}}, 
\end{equation}
with its truncated counterpart 
\begin{equation}
    \sum_{k=1}^{N^{\textsc{fem}}} m_{i,k} \,y_k' = \sum_{k=1}^{N^{\textsc{fem}}} d_{i,k}\,y_k(t) + \sum_{k=1}^{N^{\textsc{fem}}} a_{i,k}\,y_k(t) + s_i(t), \quad \quad i=1,\dots,N^{\textsc{fem}}, 
\end{equation}
then we obtain 
\begin{multline}
\label{eqn:residual_ADR_fem}
\sum_{k=1}^{N^{\textsc{fem}}} m_{i,k} \,u_k' = \sum_{k=1}^{N^{\textsc{fem}}} d_{i,k}\,u_k(t) +  \sum_{k=1}^{N^{\textsc{fem}}} a_{i,k}\,u_k(t) + s_i(t) \\
 \quad \underbrace{ - \sum_{k=N+1}^\infty \left( m_{i,k} \,u_k' - d_{i,k}\,u_k(t) - a_{i,k}\,u_k(t) \right) }_{\resfem_i(t,u)}.
\end{multline}


\subsubsection{Spatial discretization errors for finite difference and finite volume schemes}
%
To fix ideas, we will additionally analyze the form of the residuals in finite volume/finite difference form. For this we assume that the system \eqref{eqn:advection-diffusion-continuous} admits a smooth solution  $u  \in C^\infty \left( \Omega \times [t_0,t_{\text{F}}] \right)$. Let $\overline{x}$ be the grid (set of grid point coordinates) over which the continuous PDE is discretized using finite differences. The semi-discretized system is written as an ODE as follows:
\begin{equation}
\begin{split}
    \label{eqn:advection-diffusion-finite-difference}
    y'(t,\overline{x}) &= \mathbf{D}(\overline{x})\,y(t,\overline{x}) + \mathbf{A}(\overline{x})\,y(t,\overline{x}) + \phi(t,\overline{x}), \quad y(t_0,\bar{x}) = u_0(\bar{x}),
\end{split}
\end{equation}
where $\mathbf{D}$ is the discrete operator corresponding to the diffusion term, and $\mathbf{A}$ the discrete operator corresponding to the advection term. If we now substitute the exact (smooth) solution $u(t,x)$ in the semi-discretized equation above, we get the following residual over the grid $\overline{x}$:
\begin{equation}
\begin{split}
u'(t,\overline{x}) &= \mathbf{D}(\overline{x})\,u(t,\overline{x}) +  \mathbf{A}(\overline{x})\,u(t,\overline{x}) + \phi(t,\overline{x}) \\
&\quad + \underbrace{k(x)\,\Delta\,u(t,x) \big|_{x=\bar{x}} - \mathbf{D}(\overline{x})\,u(t,\overline{x})}_{\resx_{\textsc{diff}}(t,\overline{x},u)} \\
&\quad + \underbrace{a(x)\,\nabla\,u(t,x) \big|_{x=\bar{x}}- \mathbf{A}(\overline{x})\,u(t,\overline{x})}_{\resx_{\textsc{adv}}(t,\overline{x},u)}, \quad u(t_0,\bar{x}) = u_0(\bar{x}).
\end{split}
\end{equation}
We see that $\resx_{\textsc{diff}}(t,\overline{x},u)$, $\resx_{\textsc{adv}}(t,\overline{x},u)$ are the familiar residuals obtained in the analysis of finite volume/difference schemes.

%
\subsubsection{Spatial discretization errors for additively-split multiphysics systems}

Before deriving error representations for the spatial discretization in the next subsection, we illustrate the structure of residuals for several representative examples of additively-split systems. 

Consider a PDE coupled with a source term. Let $\Mass\st{1},\fun\st{1}$ be the mass matrix and the discrete PDE terms, and $\fun\st{2}$ the corresponding source terms. The FEM discretization of the PDE part only leads to:
\begin{equation}
\begin{split}
\Mass\st{1}\,y' &= \fun\st{1}(t,y)  + \resfem\,\st{1}(t,u) + \Mass\st{1}\,\fun\st{2}(y), \quad y(t_0) = y_0, \\
y' &= \Mass\st{1}\,^{-1}\,\fun\st{1}(t,y) + \resx\,\st{1}(t,u) + \fun\st{2}(t,y).
\end{split}
\end{equation}
For two coupled PDE simulations that exchange fluxes/forces $\varphi$ through the common boundaries, e.g., an elastic wing and the flow around it, the system would be:
\begin{equation}
\begin{split}
\Mass\st{1}\,y' &= \fun\st{1}(y) + \resfem\,\st{1}(t,u) + \varphi\st{1}(y,z), \quad y(t_0) = y_0, \\
\Mass\st{2}\,z' &= \fun\st{2}(z) + \resfem\,\st{2}(t,u) + \varphi\st{2}(y,z),\quad z(t_0) = z_0.
\end{split}
\end{equation}
For a domain-partitioned PDE simulation, which is important for parallelization of large-scale systems, the structure takes the form:
\begin{equation}
\begin{split}
\Mass\st{1,1}\,y' + \Mass\st{1,2}\,z'  &= \fun\st{1}(y,z) + \resfem\,\st{1}(t,u), \\
\Mass\st{2,1}\,y' + \Mass\st{2,2}\,z'  &= \fun\st{2}(y,z) + \resfem\,\st{2}(t,u).
\end{split}
\end{equation}
%
%

\subsection{Influence of spatial discretization errors}

Inserting the exact solution in the spatially semi-discrete equations of the additively-split system \eqref{eqn:split-ode} leads to the following dynamics:
\begin{equation}
\label{eqn:split-ode-perturbed}
u' =  \sum_{q=1}^\ncomp \left( \fun\st{q}(u) + \resx\,\st{q}(u)\right), 
\end{equation}
where the discretization of each component process introduces a different numerical residual $\resx\,\st{q}(u)$. We assume that these residuals can be accurately estimated along each numerical solution, and therefore we treat them as known functions of their arguments. We also assume that these residuals remain very small throughout the simulation.

Application of the GARK scheme \eqref{eq:GARK-alt_scheme} to solve the perturbed system \eqref{eqn:split-ode-perturbed} gives:
\begin{subequations}
\label{eq:GARK-alt_scheme-pert}
\begin{align}
\label{eq:GARK-alt_step-pert}
\widehat{y}_{n+1} &= \widehat{y}_n + h_n \sum_{q=1}^\ncomp \sum_{i=1}^{s\st{q}} b_i\st{q}\, \widehat{k}_{n,i}\st{q}, \\
\label{eq:GARK-alt_stages-pert}
\widehat{k}_{n,i}\st{q} &= \fun\st{q}\Bigl(T_{n,i},y_n + h_n \sum_{m=1}^\ncomp \sum_{j=1}^{s\st{m}} a_{i,j}\st{q,m}\, \widehat{k}_{n,j}\st{m}\Bigr) + \resx\,\st{q}_{n,i},\\
\nonumber
&\qquad q=1,\ldots,\ncomp, \quad  i=1,\ldots,s\st{q},
\end{align}
\end{subequations}
where we denote the residuals estimated along the numerical solution by
\begin{equation}
\label{eqn:space_residuals}
\resx\,\st{q}_{n,i} \coloneqq \resx\,\st{q}\Bigl(T_{n,i},\widehat{y}_n + h_n \sum_{m=1}^\ncomp \sum_{j=1}^{s\st{m}} a_{i,j}\st{q,m}\, \widehat{k}_{n,j}\st{m}\Bigr).
\end{equation}

\begin{proposition}
To leading order, the error in the goal function \eqref{eq:output_functional} is approximated as follows:
\begin{equation}
\label{eqn:spatial-error-impact}
\begin{split}
\mathcal{E} 
&\approx \sum_{n=1}^{\nsteps}  \sum_{q=1}^\ncomp \mu_{n,i}\st{q}\,^\top \, \resx\,\st{q}_{n,i},
\end{split}
\end{equation}
where the adjoint variables $\mu_{n,i}\st{q}$ are the ones discussed in Comment \ref{com:alternative-GARK-adjoint}.
\end{proposition}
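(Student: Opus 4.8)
The plan is to follow the template of the temporal-error proposition, but to propagate the perturbations through the stage-slope adjoints $\mu_{n,i}\st{q}$ instead of the step adjoints $\lambda_n$. The reason is structural: comparing the unperturbed stage equation \eqref{eq:GARK-alt_stages} with its perturbed counterpart \eqref{eq:GARK-alt_stages-pert}, the spatial residuals $\resx\,\st{q}_{n,i}$ enter precisely as additive increments of the stage slopes, so the natural sensitivities are $dQ/dk_{n,i}\st{q}$. First I would record the two facts that do the work. Since the reference solution $u$ satisfies the perturbed semi-discrete system \eqref{eqn:split-ode-perturbed} exactly, the perturbed GARK iteration \eqref{eq:GARK-alt_scheme-pert} produces $\widehat{y}$ that reproduces the reference trajectory at the grid nodes up to the temporal truncation error handled by the companion estimate \eqref{eqn:temporal-error-impact}; for the spatial contribution I therefore identify $Q(u(t_\F))$ with $Q(\widehat{y}_N)$ and write $\mathcal{E} = Q(y_N) - Q(\widehat{y}_N)$ to leading order. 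The second fact is the sensitivity interpretation \eqref{eqn:mu-as-sensitivity}, $\mu\stran{q}_{n,i} = dQ(y_N)/dk_{n,i}\st{q}$, which is an immediate consequence of the alternative adjoint recursion \eqref{eq:costate_GARK_K}.

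Next I would Taylor expand $Q$ about the unperturbed solution. Regarding $Q(y_N)$ as a smooth function of the full collection of stage slopes $\{k_{n,i}\st{q}\}$, and using that the only difference between $y$ and $\widehat{y}$ is the additive increment $\resx\,\st{q}_{n,i}$ in each slope, the chain rule together with \eqref{eqn:mu-as-sensitivity} gives
\[
Q(\widehat{y}_N) - Q(y_N) = \sum_{n=1}^{\nsteps}\sum_{q=1}^\ncomp\sum_{i=1}^{s\st{q}} \mu\stran{q}_{n,i}\, \resx\,\st{q}_{n,i} + \mathcal{O}\!\left(\|\resx\|^2\right),
\]
where the quadratic remainder is controlled by the boundedness of the Hessian of $Q$ and of the step map, exactly as in the temporal proposition, combined with the standing assumption that the residuals are small. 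Rearranging and absorbing the remainder yields \eqref{eqn:spatial-error-impact}, up to the sign fixed by the residual and adjoint conventions and the stage-index summation that is implicit in the statement. Crucially, no explicit backward unrolling over $n$ is needed here: the accumulation across all steps and stages is already baked into the definition of $\mu$ through the discrete adjoint recursion \eqref{eq:costate_GARK_K}, so a single first-order expansion suffices.

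The hard part is the first reduction rather than the expansion. One must argue that replacing $Q(u(t_\F))$ by $Q(\widehat{y}_N)$ discards only the temporal discretization error --- which is the object of the other estimate --- and not any genuinely spatial contribution, so that the two error sources are cleanly separated at leading order. A related, subtler point is that the residuals are evaluated along the computed stage arguments \eqref{eqn:space_residuals} rather than along the reference $u$; the resulting mismatch is a product of two small quantities (the residual itself and the stage perturbation) and is therefore quadratic, but this must be checked to certify that it is absorbed into the $\mathcal{O}(\|\resx\|^2)$ term. Everything else is routine once the sensitivity identity \eqref{eqn:mu-as-sensitivity} is in hand.
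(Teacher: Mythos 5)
Your proposal is correct and takes essentially the same route as the paper: the paper's proof likewise treats the spatial residuals as small additive perturbations of the stage-slope equations \eqref{eq:GARK-alt_stages-pert}, invokes the sensitivity interpretation \eqref{eqn:mu-as-sensitivity} of the multipliers $\mu_{n,i}\st{q}$ to get the first-order impact $\mu\stran{q}_{n,i}\,\resx\,\st{q}_{n,i}$ of each residual, and sums over all stages and steps. Your additional care about the quadratic remainder and the separation of temporal from spatial contributions is a more explicit accounting of what the paper leaves implicit in its ``to leading order'' qualifier, but it is not a different argument.
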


\begin{proof}
The proof follows immediately from the interpretation of Lagrange multipliers as sensitivities \eqref{eqn:mu-as-sensitivity}. The spatial discretization residuals  are small perturbations of the stage slope equations \eqref{eq:GARK-alt_stages-pert}. The impact of a single spatial discretization error $\resx\,\st{q}_{n,i}$ is:
\[
\mathcal{E} \approx \frac{d Q(y_{N})}{d k_{n,i}\st{m}}\, \resx\,\!\st{q}_{n,i} = \mu_{n,i}\st{m}\,\!^\top\,  \resx\,\st{q}_{n,i}.
\]
The total spatial error impact \eqref{eqn:spatial-error-impact} is obtained by adding up the contributions of all residuals in all stages and time steps.
\end{proof}

\section{Implementation details}
\label{sec:implementation}
All the algorithms discussed in the following were implemented in \texttt{python}, using the \texttt{Fenics} \cite{AlnaesBlechta2015a,LoggMardalEtAl2012a} library to model PDEs and the Gryphon framework \cite{ErikSkare2012} for time-stepping inside \texttt{Fenics}.

We use the second-order implicit-explicit GARK method discussed in Section \ref{subsec:discrete_adjoint_GARK_example}, which is a variant of \cite[Example 9]{Sandu_2015_GARK}, with coefficients
%
%
$\gamma = 1 - \sqrt{2}/2$ and $\alpha = \gamma$ for all numerical experiments. We next discuss several aspects of the underlying implementation, where we use the notation $\Delta t \equiv h$, to clearly distinguish between the temporal resolution $h$ and the space resolution $\Delta x$.

\subsection{Different solutions involved in the procedure}

We assume that the base numerical solution (henceforth, numerical solution) is computed with a local time and space resolution of ($\Delta t \equiv h$, $\Delta x$). Reference solutions are obtained by refining the grids in space, in time, and in both dimensions. The effort required to calculate these solutions is exemplified in Figure \ref{fig:reference_solutions} with the distance from the origin $O$ as the relative measure. 

\begin{figure}
    \centering
    \definecolor{ffqqqq}{rgb}{1.,0.,0.}
\definecolor{ffttcc}{rgb}{1.,0.2,0.8}
\definecolor{ffwwqq}{rgb}{1.,0.4,0.}
\definecolor{qqzzqq}{rgb}{0.,0.6,0.}
\definecolor{qqqqcc}{rgb}{0.,0.,0.8}
\begin{tikzpicture}[scale=0.5,line cap=round,line join=round,>=triangle 45,x=1.0cm,y=1.0cm]
\begin{axis}[ticks=none,
x=1.0cm,y=1.0cm,
axis lines=middle,
xmin=-1.0,
xmax=11.0,
ymin=-1.0,
ymax=10.0,
xtick={-1.0,0.0,...,10.0},
ytick={-1.0,0.0,...,10.0},]
\clip(-1.,-1.) rectangle (11.,10.);
\draw [line width=2.pt] (1.,0.)-- (2.,0.);
\draw [line width=2.pt] (2.,0.)-- (2.,8.);
\draw [line width=2.pt] (2.,8.)-- (1.,4.);
\draw [line width=2.pt] (1.,4.)-- (1.,0.);
\begin{scriptsize}
\draw [fill=qqqqcc] (1.,0.) circle (2.5pt);
\draw[color=qqqqcc] (3.8,-0.75) node {\LARGE Numerical solution ($\Delta t$, $\Delta x$)};
\draw [fill=qqzzqq] (2.,0.) circle (2.5pt);
\draw[color=qqzzqq] (4.5,1) node {\LARGE Time-refined solution ($\Delta t/2$, $\Delta x$)};
\draw [fill=ffwwqq] (1.,4.) circle (2.5pt);
\draw[color=ffwwqq] (4.5,3.3) node {\LARGE Space-refined solution  ($\Delta t$, $\Delta x$)};
\draw [fill=ffttcc] (2.,8.) circle (2.5pt);
\draw[color=ffttcc] (5.530960823518953,8.358934316665255) node {\LARGE Reference solution ($\Delta t/2$, $\Delta x/2$)};
\draw [fill=ffqqqq] (0.,0.) circle (2.5pt);
\draw[color=ffqqqq] (-0.35862629968514304,-0.35862629968514304) node {\LARGE $O$};
\draw (9, 0.35862629968514304) node {\LARGE Refine time};
\draw (1.82, 9.5) node {\LARGE Refine space};
\end{scriptsize}
\end{axis}
\end{tikzpicture}
    \caption{Reference, space- and time-refined solutions, as well as numerical solution with associated spatial and temporal resolutions. The approximate computational effort, representing the actual effort for a specific 2D problem, is represented by the distance to the origin $O$. Temporal and spatial resolution are denoted as ($\Delta t \equiv h$, $\Delta x$).}
    \label{fig:reference_solutions}
\end{figure}
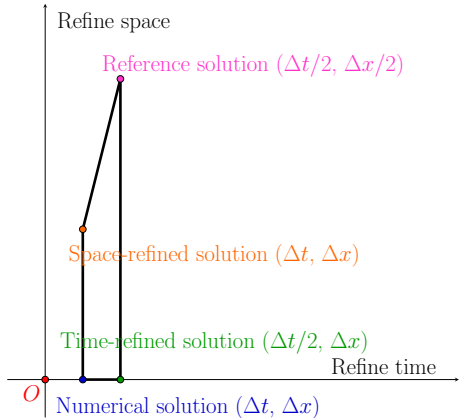

To simplify our implementation, at the start of each experiment, we determine a timestep schedule for the numerical solution. The timestep schedule of the reference solution and the time-refined solution is obtained by dividing each interval of the numerical solution in half. Note that allowing adaptive timesteps together with a variable grid size, as is done in practice, leads to an interaction of numerical errors that can complicate the validation of adjoint error estimation results. 

The spatial grid is generated in a similar manner, where we start with a uniform grid for the numerical solution, and obtain the grid for the reference and space-refined solution via uniform refinement by a factor of two in each spatial dimension. 

The reference, time-refined and space-refined solutions are each computed independent of the numerical solution and they are stored and recalled as needed to compute the residuals as described in the following subsections. Then, adjoint sensitivities are computed for the numerical solution and used with the residuals to construct error estimates.

\subsection{Estimation of error residuals}


We use the time-refined solution as a proxy for the exact solution to compute the time residuals (local truncation error) via \eqref{eqn:temporal_residuals}. For each timestep of the numerical solution, $t_n$ to $t_{n + 1} = t_n + \Delta t$, we compute the local truncation error by taking a full $\Delta t$ step while starting at the corresponding time-refined solution. Once the residuals are computed, we use the adjoint information obtained using the numerical solution to compute the time error according to \eqref{eqn:temporal-error-impact}. 


To estimate the spatial residuals, we plug in the $y_n$ and $k$-stage values from the space-refined solution projected onto the numerical grid in \eqref{eq:GARK-alt_stages-pert}. The spatial errors are computed by plugging in the information from the adjoints and the residuals into \eqref{eqn:spatial-error-impact}. We compute a separate spatial error for each partition of the GARK method. 

Note that computing a computationally expensive space-time reference solution may not be required in practice. Indeed, computing a space-refined solution could be avoided for instance by applying recovery methods, see \cite{Zienkiewicz1992}, or other post-processing techniques. Also, embedded methods provide time-accurate solutions which can be used to estimate local truncation errors efficiently. We explicitly compute a space-time reference solution to remove any influence from post-processing steps so that we can verify the adjoint error estimation framework presented in the paper. 


\section{Numerical Results}
\label{sec:numerics}

We perform three types of experiments on reaction-diffusion problems:

\begin{itemize}
    \item[1.] \textit{Time convergence} experiments validate the implementation of the time integration methods and the sensitivity computations on a reaction-diffusion system from literature \cite{Calvo_2001}.
    \item[2.] \textit{A posteriori error estimation} experiments assess the quality of the proposed adjoint-based error estimators against the difference between the goal function values computed using the space-time reference and the numerical solutions.
    \item[3.] \textit{Adaptive space-time refinement} experiments use the computed adjoint error estimators to locally refine the space and the time grids based on criteria described in the subsections that follow.
\end{itemize} 

Throughout, we consider the goal function to be a linear function of the solution at the end of the time interval that corresponds to the integral of the solution over the entire spatial domain ($\Omega$). It is defined as follows:
\begin{equation}
\label{eq:qoi-numerical-experiments}
\Psi(y_0) = Q(y_N) = \sum_{k=1}^{N^{\textsc{fem}}} \int_{\Omega} \bigl(y_N\bigr)_k \,\varphi_k(x) \, dx. 
\end{equation}
Furthermore for each problem under consideration, we additively split the right-hand side into two components as follows:
\[
y' = f\st{1}(y) + f\st{2}(y),
\]
where $f\st{1}$ represents the FEM discretization of the diffusion part, parallel to the description in Section~\ref{sec:impactoferrors}, and $f\st{2}$ represents the FEM discretization of the reaction part. We use Lagrange polynomials of degree two (except for the Gray-Scott equations where we use degree one) as the basis for finite element discretization of each problem. In the following, when we report spatial errors for each partition, they correspond to the contribution from each partition to the error in the goal function.

\subsection{A simple reaction-diffusion system}

Consider the reaction diffusion problem from \cite{Calvo_2001}:
\begin{subequations}
\label{eqn:reaction-diffusion-calvo}
\begin{equation}
\begin{split}
& \frac{\partial u}{\partial t} = \nu \Delta u + u -u^3 +f, \\
& \Omega \equiv (x,y) \in [-1,3]\times[-1,1]~\textnormal{(space units)}, \quad t \in [0, 1.5]~\textnormal{(time units)},
\end{split}
\end{equation}
where the forcing term ensures that the exact solution of \eqref{eqn:reaction-diffusion-calvo} is 
\begin{equation}
\begin{split}
u(t,x,y) &= \begin{cases}
\frac{1}{30}\,(2 + \cos(\pi t)) \, (x+1)(2x - \frac{21}{4})\, (y^2 - 1), & -1 \leq x \leq 2,\\[3pt]
\frac{1}{30}\,(2 + \cos(\pi t)) \, (3-x)(x - \frac{23}{4})\, (y^2 - 1),  & 2 \leq x \leq 3.
\end{cases}
\end{split}
\end{equation}
\end{subequations}
The initial condition is obtained from the assumed true solution and homogenous Dirichlet boundary conditions are used. The spatial grid resolution is $40 \times 20$. For this system, we take  $f\st{1}$ to be the FEM discretization of $\nu \Delta u$ and $f\st{2}$ that of  $u - u^3 + f$.

To assess the convergence of the forward and adjoint methods, we compute the reference solution using a time step of $\Delta t_{\textsc{ref}} = 0.15 \times 2^{-7}$ in place of the analytic solution, while the numerical solutions are computed with $\Delta t = 0.15 \times 2^{-7}$, $k=-4,\dots,0$. Figure \ref{fig:fixed_step_convergence} demonstrates that both the primal and the adjoint solutions converge with order two, as expected. The adjoint convergence order, in particular, is in agreement with the theory for discrete GARK adjoints developed in Section \ref{sec:orderconvergence}. 

\begin{figure}[t!]
	\centering
	\pgfplotstableread{Figures/data_fixed_step_convergence.txt}{\fixedConvergence}
	\begin{tikzpicture}[scale=0.85]
	\begin{loglogaxis}[xlabel=Time stepsize $\Delta t \equiv h $,ylabel=Relative $l^2$-error,ymajorgrids=true,xmajorgrids=true,legend pos = outer north east]
	\addplot[color=blue,mark=x,line width=1.25pt] table [y=y_err, x=dt_vals]{\fixedConvergence};
	\addplot[color=magenta,mark=o,line width=1.25pt] table [y=l_err, x=dt_vals]{\fixedConvergence};
	\legend{Forward,Adjoint}
	\end{loglogaxis}
	\end{tikzpicture}
	\caption{Convergence of forward and adjoint time integrators on the reaction-diffusion system \eqref{eqn:reaction-diffusion-calvo}, evaluated in the Eucledian norm ($l^2$).}
	\label{fig:fixed_step_convergence}
\end{figure}
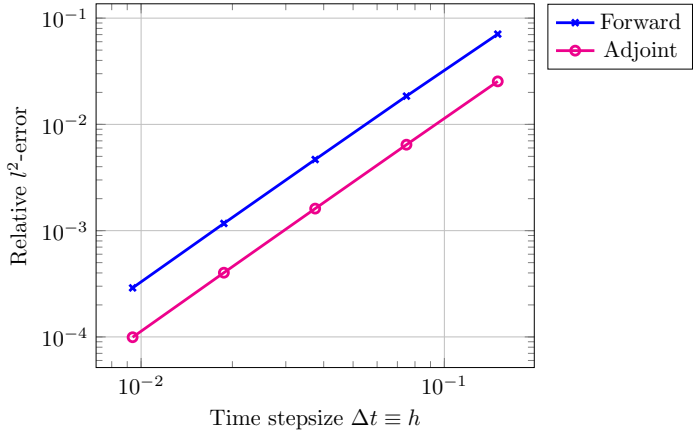

\subsection{Gray-Scott Equations}

The Gray-Scott equations \cite{gray1984}
\begin{subequations}
\label{eqn:gray-scott}
\begin{equation}
	\begin{split}
	& \parfrac{u}{t} = d_u \Delta u  -  u v^2 + f(1 - u),\qquad
	\parfrac{v}{t} = d_v \Delta v  +  u v^2 - (f + k)v, \\
	& \Omega \equiv (x,y) \in [0,2]\times[0,2]~\textnormal{(space units)}, \quad t \in [0, 50]~\textnormal{(time units)},
	\end{split}
\end{equation}
model the diffusion and reaction of two species $u$ and $v$ engaged in the following chemical reactions:
\begin{equation}
	\begin{split}
	U + 2V &\rightarrow P  ,\qquad
	V \rightarrow P.
    \end{split}
\end{equation}  
%
In \eqref{eqn:gray-scott}, $u$ and $v$ are the concentrations of the two chemical species, $d_u$ and $d_v$ are the coefficients of diffusion of the species, $f$ is the feed rate of $u$, and $f + k$ is drain rate of $v$. The parameter values $f = 0.024$, $k = 0.06$, $d_u = \num{8e-2}$ and $d_v = \num{4e-2}$ are used.
The initial conditions for the model are: 
\begin{equation}
\begin{split}
u(x, y, 0) &= \begin{cases}
1 - 2v & 0.75 \leq x \leq 1.25 \\
0 & \textnormal{otherwise},
\end{cases} \\
v(x, y, 0) &= \begin{cases}
0.25\, \sin^2(4 \pi x)\, \sin^2(4 \pi y) & 0.75 \leq x \leq 1.25 \\
1 & \textnormal{otherwise}.
\end{cases}
\end{split}
\end{equation}
\end{subequations}
We assume homogeneous Neumann  boundary conditions. Spatial grids of dimension $20 \times 20$ and $10 \times 10$ are used for the reference and the numerical grid, respectively. Likewise, $\Delta t_{\textsc{ref}} = 0.01$ and $\Delta t_{\textsc{num}} = 0.02$ are used for the reference and numerical timestep schedule, respectively. Moreover, we split the system along the two physics with $f\st{1}$  representing the FEM discretization of the stacked diffusion terms $[d_u \Delta u ; d_v \Delta v]$ and $f\st{2}$ representing that of the stacked reaction terms $[- u v^2 + f(1 - u); u v^2 - (f + k) v]$.

Table \ref{tab:goals_and_errors} shows the goal function values and the adjoint-based estimates for temporal and spatial errors. We observe that the sum of the error measures predicts the difference between the goal functions to within 12\%. For the given settings, the space error arising from the diffusion part contributes most to the overall estimated error as shown in Table \ref{tab:goals_and_errors}.
\begin{figure}
    \centering
    \includegraphics[scale=0.26]{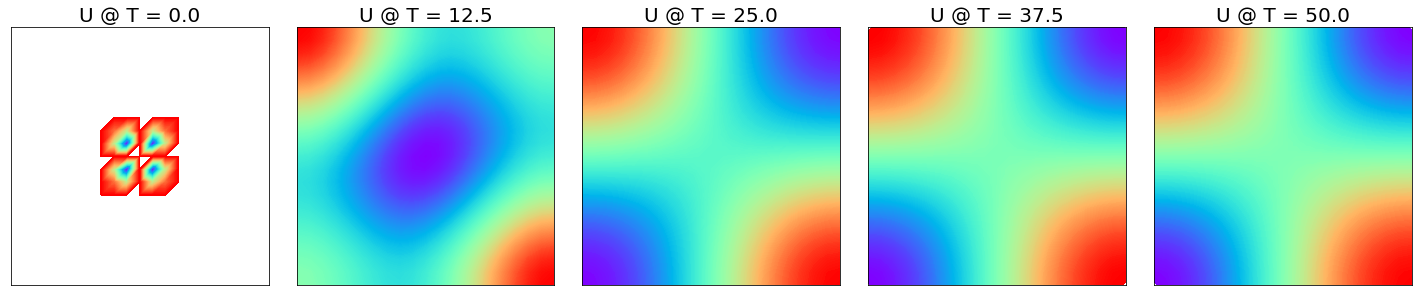}
    \caption{Snapshots of the solution of the Gray-Scott equations \eqref{eqn:gray-scott} on the reference space and time grid.}
    \label{fig:grayscott_solution_snapshot}
\end{figure}



\subsection{BSVD}

The BSVD reaction-diffusion PDE  \cite{heineken2006partitioning} is:
\begin{subequations}
\label{eqn:BSVDeqn}
\begin{equation}
\frac{\partial u}{\partial t} = \nabla \cdot (D(x,y) \nabla u) + 10\, (1 - u^2)(u + 0.6), 
\end{equation}
where 
\begin{equation}
\label{eqn:BSVDeqn_domain}
    \Omega \equiv (x,y) \in [0, 1] \times [0, 1]~\textnormal{(space units)}, \quad t \in [0, 7]~\textnormal{(time units)}.
\end{equation}
The space-dependent diffusion coefficient in \eqref{eqn:BSVDeqn} is defined as
\begin{equation}
D(x,y) = \frac{1}{10}\sum_{i = 1}^{3} e^{-100((x - 0.5)^2 + (y - y_i)^2)},
\end{equation}
whereas the initial condition is given by
\begin{equation}
u(x, y, 0) = 2e^{-10((x - 0.5)^2 + (y + 0.1)^2)} - 1,
\end{equation}
with $y_i$ for $i = 1,2,3$ specified as $y_1 = 0.6$, $y_2 = 0.75$ and $y_3 = 0.9$. Again, homogeneous Neumann boundary conditions are applied, i.e.
\begin{equation}
D(x,y)\, \nabla u(x,y,t)  \cdot  \hat n(x, y) = 0, 
\end{equation}
\end{subequations}
on the boundary, where $\hat n(x, y)$ is the normal vector pointing outwards at the boundary. The spatial resolution is $80 \times 80$ for the reference grid and $40 \times 40$ for the numerical grid and the reference timestep is $\Delta t_{\textsc{ref}} = 0.005$, whereas the numerical timestep is $\Delta t_{\textsc{num}} = 0.01$. The system is again split along the two physics with $f\st{1}$ being the FEM discretization of the diffusion term, $\nabla \cdot (D(x,y) \nabla u)$, and $f\st{2}$ representing the FEM discretization of the reaction term, $10\, (1 - u^2)(u + 0.6)$.
\begin{table}[htb!]
    \footnotesize
    \centering
    \begin{tabular}{|c|c|c|}
        \hline
          & Gray-Scott Equations \eqref{eqn:gray-scott} & BSVD \eqref{eqn:BSVDeqn}  \\ \hline
        Goal function $\Psi_\text{ref}$  & \num{3.9906E+00} &	\num{1.5228E-01} \\ \hline  
        Ref. error  $\mathcal{E}_\text{ref} = \Psi_\text{ref} - \Psi_{num}$& \num{-6.5934E-03} & \num{-2.4749E-01}  \\ \hline 
        Time error ($\mathcal{E}_1$) & \num{-4.2056E-07} &	\num{-5.9835E-04} \\ \hline  
        Space error (diffusion $\mathcal{E}_2$) & \num{-5.8390E-03} &	\num{-7.5768E-04}  \\ \hline  
        Space error (reaction $\mathcal{E}_3$) & \num{-7.0508E-06} &	\num{-7.2872E-02} \\ \hline  
        Estimated error ($\mathcal{E} = \sum_i \mathcal{E}_i$) & \num{-5.8465E-03} &	\num{-7.4228E-02} \\ \hline
        Estimation accuracy ($(\mathcal{E} - \mathcal{E}_\text{ref})/\mathcal{E}_\text{ref})$ & \hphantom{+}\num{1.1328E-01} & \hphantom{+}\num{7.0008E-01} \\ \hline
    \end{tabular}
    \caption{Goal function values and error estimates rounded to five digits on successively refining space and time grids on Gray-Scott \eqref{eqn:gray-scott} and BSVD equations \eqref{eqn:BSVDeqn}.}
    \label{tab:goals_and_errors}
\end{table}

Figure \ref{fig:bsvd_solution_snapshot} shows the solution to the BSVD equation \eqref{eqn:BSVDeqn} on the space-refined and time-refined grid. Table \ref{tab:goals_and_errors} shows the goal function values and estimates of errors of each type. We observe that the sum of the error measures predicts the difference between the goal functions to within {70\%} of the actual error. Even though the difference is rather high, the error measures have the same sign. Moreover, when we perform adaptive mesh refinement in the next subsection, we notice that numerical errors are estimated with increasing accuracy. This is expected, as the numerical solution is employed instead of the exact solution in the adjoint systems of equations, see Section~\ref{sec:impactoferrors}.

\begin{figure}[htb!]
    \centering
    \includegraphics[scale=0.26]{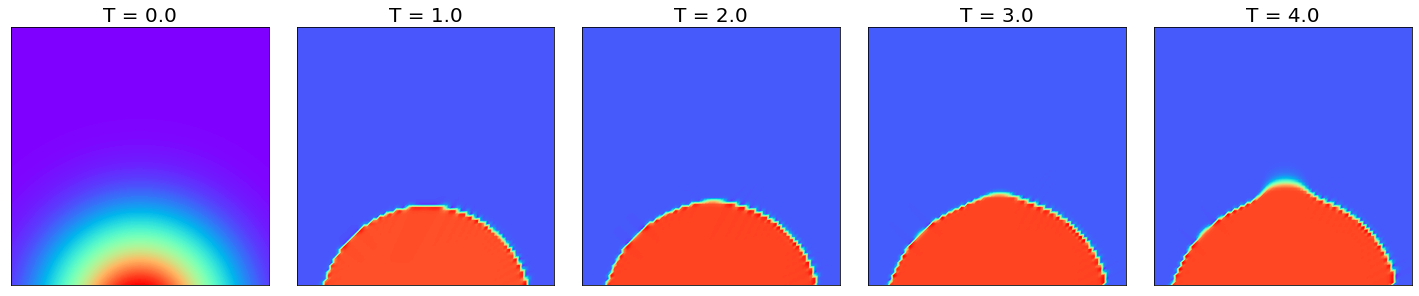}
    \caption{Snapshots of the BSVD solution \eqref{eqn:BSVDeqn} on the reference space and time grid.}
    \label{fig:bsvd_solution_snapshot}
\end{figure}



\subsubsection{Goal-oriented adaptive refinement for the BSVD problem} 
We perform adaptive space-time refinement experiments on the BSVD equation \eqref{eqn:BSVDeqn}. We limit the time span to $[0, 4]$ time units, i.e., we integrate to the point where a small protrusion forms at the front of the solution as shown in Figure \ref{fig:bsvd_solution_snapshot}.  We start with uniform spatial grids (40$\times$40 for the reference and space-refined solutions, and 20$\times$20 for the numerical and time-refined solution) and uniform time grids ($\Delta t_{\textsc{ref}} = 0.01$ and $\Delta t _{\textsc{num}} = 0.02$ for the reference and numerical grids, respectively). 

At each refinement stage, we compute the reference, space-refined, time-refined and numerical solution using the mesh and timestep schedule at the start of that refinement stage. Next, we use the forward numerical solution and run the adjoint model to compute the sensitivities for each stage and step of the goal function. We again choose the goal function as the integral of the final solution over the entire spatial domain, see \eqref{eq:qoi-numerical-experiments}.

We compute the time residuals using the time-refined solution by plugging it into equation \eqref{eqn:temporal_residuals}. Likewise, the space residuals are obtained by plugging in the space-refined solution into equation \eqref{eqn:space_residuals}. Time and space errors are obtained using the computed sensitivities and residuals, according to the error equations \eqref{eqn:temporal-error-impact} and \eqref{eqn:spatial-error-impact}, respectively. 

At the end of each refinement stage, we inspect the contribution from the cells of the meshgrid to the total spatial error and to the spatial error accumulated over each right-hand side partition of \eqref{eqn:split-ode}. We uniformly refine the cells of the numerical grid that are in the 90th percentile of the total spatial error or total spatial error by partition. The reference grid is refined by uniformly refining the updated numerical grid. Likewise, the time grid of the numerical solution is obtained by dividing in half the intervals that are in the 80th percentile of contribution to the total temporal error by absolute value. The reference time grid is obtained by uniformly refining the updated numerical time grid. The updated space and time grids are used in the subsequent refinement stages. 

Figure \ref{fig:bsvd-numerical-time-refinement} shows how the initially uniform timestep schedule between [0, 4] time units is transformed after each subsequent refinement for the numerical solution. Likewise, Figure \ref{fig:bsvd-mesh-and-solution-plot} shows the progression of the numerical spatial grids after each refinement and the computed solution at $T = 4.0$ on the corresponding grid. 


\begin{figure*}[t!]
	\centering
	\begin{subfigure}[t]{0.48\textwidth}
		\centering
		\includegraphics[width=0.9\textwidth,height=0.65\textwidth]{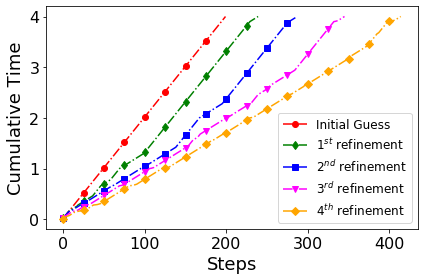}
		\caption{Cumulative Steps}
	\end{subfigure}
	~
	\begin{subfigure}[t]{0.48\textwidth}
		\centering
		\includegraphics[width=\textwidth,height=0.65\textwidth]{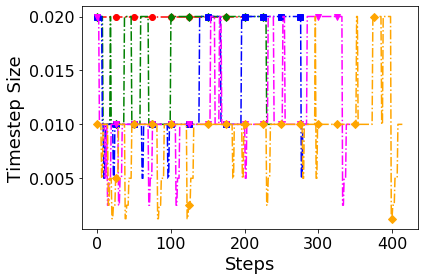}
		\caption{Stepsize}
	\end{subfigure}
	\caption{Progression of timestep schedule and stepsize of each timestep after each refinement of the {\it numerical} solution to the BSVD equation \eqref{eqn:BSVDeqn}.}
	\label{fig:bsvd-numerical-time-refinement}
\end{figure*}

\begin{table}[htb!]
    \footnotesize
    \centering
    \begin{tabularx}{\textwidth}{|c|Y|Y|Y|Y|}
        \hline 
        & Goal function  & Ref. error  & Estimated error  & Est. accuracy \\
        & $\qquad \Psi_\text{ref}$ & $\Psi_\text{ref} - \Psi_\text{num}$ & $\ \ \ \mathcal{E}$ & $(\mathcal{E} - \mathcal{E}_\text{ref})/\mathcal{E}_\text{ref}$ \\
         \hline
          Initial grid & \num{-3.9739E-01} & \num{-5.1521E-01}	 & \num{-5.5565E-01} & \num{-7.8482E-02}  \\ \hline
          $1^{st}$ refinement & \num{-5.0968E-01} & \num{-1.6904E-01} & \num{-1.5573E-01} & 	\num{7.8713E-02	}\\ \hline
          $2^{nd}$ refinement & \num{-5.7183E-01} & \num{-9.5851E-02} & \num{-7.0018E-02} & \num{2.6951E-01	}\\ \hline
          $3^{rd}$ refinement & \num{-6.0371E-01} & \num{-4.3535E-02} & \num{-3.5612E-02} & \num{1.8200E-01	}\\ \hline
          $4^{th}$ refinement & \num{-6.1907E-01} & \num{-1.8293E-02} & \num{-1.8407E-02} & \num{-6.2296E-03}\\
        \hline
    \end{tabularx}
    ~\newline
    \begin{tabularx}{\textwidth}{|c|Y|Y|Y|}
        \hline
        & Time error ($\mathcal{E}_1$)  & Space err. $\!$(diffusion $\mathcal{E}_2$) & Space err. $\!$(reaction $\mathcal{E}_3$) \\
        \hline
        Initial grid  &  \num{-3.2040E-03} & \num{-1.6649E-04} & \num{-5.5228E-01}\\ \hline
        $1^{st}$ refinement & \num{-1.0089E-03} & \num{-6.3589E-03} & \num{-1.4836E-01}\\ \hline
        $2^{nd}$ refinement & \num{-1.2142E-04} & \num{-2.4511E-03} & \num{-6.7446E-02} \\ \hline
        $3^{rd}$ refinement & \num{2.3241E-04} & \num{-3.3892E-03} & 	\num{-3.2455E-02}\\ \hline
        $4^{th}$ refinement & \num{6.2827E-05} & \num{-3.8679E-03} & \num{-1.4602E-02}\\ \hline
    \end{tabularx}
    \caption{Goal function values and error estimates rounded to five digits on successively refined space and time grids for the BSVD equation \eqref{eqn:BSVDeqn}.}
    \label{tab:bsvd_refinement_goals_and_errors}
\end{table}

Table \ref{tab:bsvd_refinement_goals_and_errors} shows the goal function values on the reference solution grid and the reference error as well as the evolution of estimated errors with each refinement of the space and time grid. We observe a first order decrease in the reference and estimated errors with each adaptive refinement. We also observe that the estimated errors initially closely mimic the reference errors when both space and time grid are still relatively coarse. During refinement reference and estimated errors get further apart from each other, before approaching again after the 3rd refinement. This non-uniformity under refinement can be attributed to a pre-asymptotic behavior. The spatially refined grids are shown in Figure~\ref{fig:bsvd-mesh-and-solution-plot} and clearly demonstrate that the protrusion area is well-identified by the error estimation and refinement process.

\FloatBarrier

\begin{figure}[htb!]
    \centering
    \includegraphics[scale=0.4]{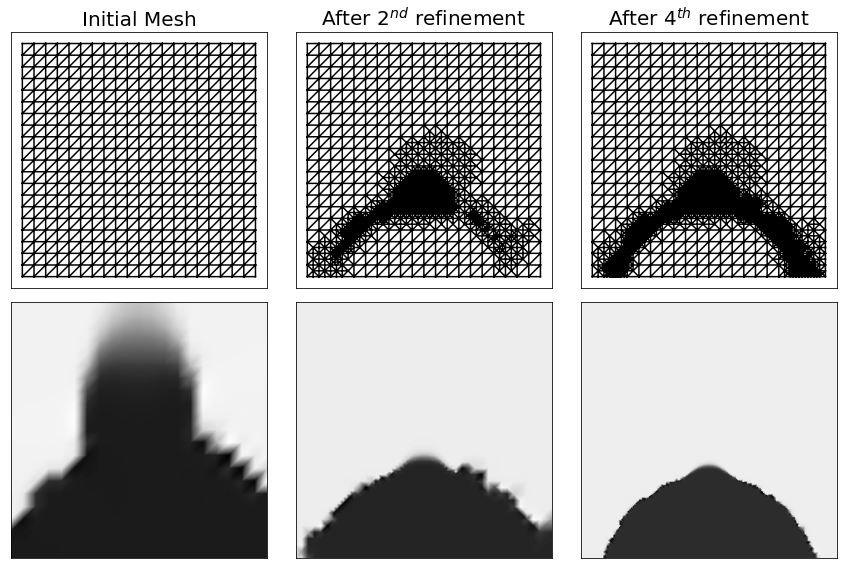}
    \caption{Progression of meshes and solutions for the numerical solution and the BSVD equation \eqref{eqn:BSVDeqn}. Rows: 1) Initial Mesh 2) After $1^{st}$ refinement 3) After $2^{nd}$ refinement 4) After $3^{rd}$ refinement 5) After $4^{th}$ refinement and their corresponding solutions.}
    \label{fig:bsvd-mesh-and-solution-plot}
\end{figure}



\section{Conclusions}
\label{sec:conclusions}

In numerical simulations of multiphysics systems various \\ sources of error can pollute the overall solution. For safety applications and for applications where the decision process crucially depends on the simulation predictions, it is essential to quantify these errors so as to modify the solution parameters, such as adapting the spatial grid, adapting the time grid, and using different finite elements, to obtain a better solution.

In this work, we developed a goal oriented a posteriori error estimation framework using the generalized structure additive Runge-Kutta (GARK) methods for both the forward and adjoint system. We derived the discrete GARK adjoint and proved that it converges at the same rate as the forward GARK method. We highlighted the role of residuals from a time-stepping standpoint and revealed their structure for a number of multiphysics problem scenarios. The method allows us to not only accurately estimate errors, but also to attribute these errors to different physical processes, or different domains of the grid, when the domain partitioning is recast in the GARK setting. A detailed description of the implementation of the framework was given along with the various types of reference solutions needed by the framework. We used the \texttt{Fenics} library to build PDE based numerical examples and implemented our time integrators by extending the \texttt{Gryphon} framework.

Numerical experiments on a reaction-diffusion system showed second order temporal convergence for both the forward method and the adjoint method. These results are in perfect agreement with the theory, established in the paper. Error estimates on two other reaction-diffusion systems were computed and found to accurately predict the reference error. We also performed a space-time adaptive refinement experiment on a reaction-diffusion model, where both the reference error and the estimated error have been found to converge with order one. Guided by our goal-oriented error estimation framework, mesh refinement was carried out locally, in space and time, at those locations, where the solution was difficult to approximate.


\section*{Acknowledgments}
We would like to thank Steven Roberts and Arash Sarshar for their time with reviewing our implementation of these methods. We would also like to thank Stefan Wild from Argonne National Laboratory for carefully reading the manuscript and for giving valuable remarks. 

\bibliography{Bib/gark_adjoints,Bib/sandu,Bib/fenics}
\bibliographystyle{unsrt}

\newpage



\end{document}